\theoremstyle{definition}
\newtheorem{thm}{Theorem}[section]
\newtheorem{lem}[thm]{Lemma}
\newtheorem{definition}[thm]{Definition}
\newtheorem{rem}[thm]{Remark}
\newtheorem{prop}[thm]{Proposition}
\newtheorem{ex}[thm]{Example}
\newtheorem{cor}[thm]{Corollary}
\newcommand{\R}{\mathbb{R}}
\newcommand{\A}{\mathcal{A}}
\newcommand{\D}{\mathcal{D}}
\newcommand{\abs}[1]{\lvert {#1} \rvert}
\newcommand{\ang}[2]{\langle {#1},\, {#2}\rangle}
\newcommand{\Conf}{\mathrm{Conf}\,}
\newcommand{\fK}{\widetilde{\mathcal{K}}_n}
\newcommand{\id}{\mathrm{id}}
\newcommand{\Int}{\mathrm{Int}\,}
\newcommand{\K}{\mathcal{K}_n}
\newcommand{\ord}{\mathrm{ord}\,}
\newcommand{\rank}{\mathrm{rank}\,}
\newcommand{\B}{\mathcal{B}}
\title[Non-trivalent graph cocycle and the long knot space]{Non-trivalent graph cocycle and cohomology of the long knot space}
\author{Keiichi Sakai}
\date{\today}
\thanks{The author is supported by 21st century COE program at Graduate School of Mathematical Sciences,
University of Tokyo}
\address{Graduate School of Mathematical Sciences, University of Tokyo}
\email{ksakai@ms.u-tokyo.ac.jp}
\begin{document}

\maketitle

\begin{abstract}
In this paper we show that via the configuration space integral construction a non-trivalent graph cocycle can
also yield a non-zero cohomology class of the space of higher (and even) codimensional long knots.
This simultaneously proves that the Browder operation induced by the operad action defined by R. Budney
is not trivial.
\end{abstract}

\section{Introduction}

Recently the (co)homological properties of the spaces $\K$ (or $\fK$) of (framed) long knots in $\R^n$ are widely
studied; the classical case ($n=3$) by R. Budney \cite{Budney03}, Budney and F. Cohen \cite{BudneyCohen05};
the case $n>3$ by D. Sinha \cite{Sinha02, Sinha04}, V. Turchin \cite{Tourtchine04_2}, P. Salvatore \cite{Salvatore06},
P. Lambrechts, Turchin and I. Voli\'c \cite{LTV06}, and others.
Their approaches in some senses make use of the {\it little disks operad} and its action on $\fK$, which induces on
$H_* (\fK )$ the {\it Browder operation}, a structure of a {\it Poisson algebra}.
This Poisson structure has not been well understood, and studied in \cite{K06, Salvatore06, Tourtchine04_2}, and so on.

There is another geometric approach to $H^*_{DR}(\K )$ (or $H^*_{DR}(\fK )$).
A. Cattaneo, P. Cotta-Ramusino and R. Longoni \cite{CCL02} constructed a cochain map from certain graph complex to the
de Rham complex of $\K$ ($n>3$) via perturbative expansion of Chern-Simons theory, which generalizes the integral
expression of the Vassiliev invariants for knots in $\R^3$ due to R. Bott, C. Taubes \cite{BottTaubes94} and
independently to T. Kohno \cite{Kohno94}.
Moreover they proved that the induced map on cohomology is injective on the trivalent graph cocycles.
The injectivity was proved by evaluating the cohomology classes over the cycles obtained from {\it chord diagrams}.

Almost nothing has been known about the cohomology classes coming from non-trivalent graphs
(in the case of ordinary knots, there is a result of Longoni \cite{Longoni04}; see below).
One reason is that we do not know the corresponding homology cycles.

In this paper we combine the de Rham theory for $\K$ with the action of little disks operad, and obtain the first
example of a non-trivalent graph cocycle which realizes a non-zero cohomology class of $\K$.

\begin{thm}[for the notations, see \S \ref{BottTaubes}]\label{main}
Suppose $n>3$ is odd.
Then the graph cohomology group $H^{3,1}(\D^* )$ consisting of trivalent graphs with exactly one four-valent vertex
is isomorphic to $\R$.
Its generator $\Gamma$ gives a non-trivial element $I(\Gamma ) \in H^{3n-8}_{DR}(\K )$ via the configuration
space integral.
\end{thm}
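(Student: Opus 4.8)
The plan is to establish the two assertions separately: first the purely combinatorial fact that $H^{3,1}(\D^*)\cong\R$ with an explicit generator $\Gamma$, and then the geometric fact that the associated de Rham class $I(\Gamma)$ is non-zero, which I would detect by integrating it over a homology cycle built from Budney's operad action. Since the configuration space integral $I$ is a cochain map from $\D^*$ to $\Omega^*_{DR}(\K)$ (this is the Cattaneo--Cotta-Ramusino--Longoni construction, valid for $n>3$ once the anomalous and hidden faces are shown to vanish), the cocycle condition $\delta\Gamma=0$ coming from the first part immediately gives that $I(\Gamma)$ is closed, so that $[I(\Gamma)]\in H^{3n-8}_{DR}(\K)$ is well defined. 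The whole difficulty is then concentrated in proving non-vanishing of this class.

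For the first part I would list the finite set of graphs in the relevant bidegree --- those having exactly one four-valent internal vertex, all other internal vertices trivalent --- together with their orientations (here the hypothesis that $n$ is odd fixes the sign conventions, i.e.\ whether edges are symmetric or antisymmetric), write down the matrix of the graph differential $\delta$ on this finite-dimensional space, and compute its cohomology by linear algebra. I expect the outcome to be one-dimensional, generated by the simplest such graph (a single four-valent vertex joined by four edges to four points on the line), possibly in a short explicit combination with the other admissible graphs; I would record this generator as $\Gamma$. This step is routine once the sign and orientation bookkeeping is set up, and I would also check there that the hidden faces needed to make $I(\Gamma)$ closed indeed vanish for $n>3$.

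For the non-vanishing I would produce an explicit $(3n-8)$-cycle and evaluate $\langle I(\Gamma),-\rangle$ on it. The cycle is the Browder square $[v,v]$ of a seed cycle $v$, where $v$ is an explicit family of long knots over $S^{n-3}$ whose \emph{first Gauss integral} --- the degree-$(n-3)$ form obtained by integrating the single-chord Gauss form over the chord-position fiber --- evaluates to a non-zero number $\ell$. Using Budney's little $(n-1)$-cubes action, for which the two-ary part is homotopy equivalent to $S^{n-2}$ and the induced Browder bracket raises degree by $n-2$, the bracket $[v,v]$ is represented by the insertion map on $S^{n-2}\times S^{n-3}\times S^{n-3}$, a cycle of dimension $(n-2)+(n-3)+(n-3)=3n-8$ matching the degree of $I(\Gamma)$. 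The hypothesis that $n$ is odd is what prevents $[v,v]$ from vanishing for graded-symmetry reasons and what makes the relevant signs survive.

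The heart of the proof, and the step I expect to be the main obstacle, is the explicit evaluation of $\int_{[v,v]}I(\Gamma)$ by a degeneration argument: I would pull the two knots infinitely far apart in the transverse $\R^{n-1}$ and analyse how the configuration space integral factorises in the limit. Each vertex of $\Gamma$ is assigned to the first knot, to the second knot, or to the ``neck'' between them, and one must show that every ``impure'' distribution contributes zero by dimension and degeneracy, leaving a single surviving term in which the four-valent vertex sits in the neck. In that term the four edges emanating from the four-valent vertex degenerate, after rescaling near each knot, into a chord Gauss form on each side, while the overall separation direction sweeps out the operad sphere $S^{n-2}$; consequently $\int_{[v,v]}I(\Gamma)=c\cdot\ell^2$, where $c\neq 0$ is a universal constant coming from the $S^{n-2}$-direction and neck integration. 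The delicate points are the uniform control of the integrand under the degeneration (so that the Bott--Taubes type vanishing of spurious faces applies), the precise identification of the two surviving chord factors, and the verification that $c$ and its sign are non-zero for $n$ odd. Granting these, $c\ell^2\neq0$ shows simultaneously that $I(\Gamma)$ is a non-trivial cohomology class and that the Browder bracket $[v,v]$ is non-zero.
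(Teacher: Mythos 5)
Your combinatorial first step matches the paper (the paper in fact just records the computation of $H^{3,1}(\D^*)$, done by computer, with a nine-term generator $\Gamma$), but your non-vanishing argument rests on a structure that does not exist in this setting. Budney's operad action on the framed long knot space $\fK$ is an action of the \emph{little 2-cubes} operad: the two-ary space $\B_2(2)$ is homotopy equivalent to $S^1$, so the induced Browder operation raises homological degree by exactly $1$, not by $n-2$. There is no little $(n-1)$-cubes action with two-ary part $S^{n-2}$ on these spaces (indeed, the paper's Corollary 1.2 is precisely the first non-triviality statement for the degree-$1$ bracket). With the correct degree shift, your proposed cycle $\lambda(v,v)$ for $v\in H_{n-3}$ lives in dimension $2(n-3)+1=2n-5\neq 3n-8$, so it cannot pair with $I(\Gamma)$ at all; the dimension count $(n-2)+(n-3)+(n-3)$ only came out right because of the phantom operad sphere. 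A second obstruction: a seed cycle $v\in H_{n-3}(\K)$ with non-zero single-chord Gauss integral is not available --- the resolution of a chord diagram with one (isolated) chord is null-homologous (this is the 1-term relation), so your class $\ell$ has no candidate carrier among knot cycles.

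The paper circumvents both problems by bracketing two \emph{different} cycles, one of which is not a knot cycle but a framing cycle: $e\in H_{n-3}(\fK)$ is the image of the clutching map $S^{n-3}\to\Omega SO(n-1)\hookrightarrow\fK$ (non-trivial precisely for $n$ odd), and $v_2\in H_{2(n-3)}$ is the resolution of the two-chord diagram. Then $\lambda(e,v_2)\in H_{(n-3)+2(n-3)+1}(\fK)=H_{3n-8}(\fK)$ has the right dimension, and its image $\Lambda$ under the forgetful map $r:\fK\to\K$ admits the concrete description ``$v_2$ rigidly rotated by the loop of rotations $e$.'' The evaluation is also not a neck factorization of the form $c\cdot\ell^2$ with the four-valent vertex in a neck: instead one localizes the symmetric volume form of $S^{n-1}$ near $(\pm 1,0,\dots,0)$ and shrinks the resolution sizes to zero, after which eight of the nine graphs in $\Gamma$ are shown to contribute nothing, and the remaining graph contributes a linking number times the degree of an explicit two-fold covering $F$ branched over the diagonal of $(S^{n-1})^2$, giving $\ang{I(\Gamma)}{\Lambda}=\pm 1$. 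So while your overall architecture (combinatorial computation plus pairing against an operad-produced cycle) is the right one, the specific cycle you propose cannot be constructed, and the factorization heuristic for the integral does not reflect how the surviving term actually arises.
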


Theorem \ref{main} is an analogous result to those of \cite{K06, Salvatore06}, but the proof is more geometric.
We prove the non-triviality of $I(\Gamma )$ by evaluating it on a cycle produced by the action of little disks
operad on the space $\fK$, defined in \cite{Budney03}.
Thus we immediately obtain the following.

\begin{cor}\label{Browder_nonzero}
When $n>3$ is odd, the Browder operation induced by the operad action on $\fK$ in the sense of \cite{Budney03}
is non-trivial.\qed
\end{cor}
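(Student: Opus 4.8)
The plan is to prove the two halves of Theorem \ref{main} by quite different methods: the isomorphism $H^{3,1}(\D^*)\cong\R$ is a finite combinatorial computation, while the non-vanishing of $I(\Gamma)$ is a geometric evaluation against an explicit homology cycle produced by the operad action.

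For the computation of $H^{3,1}(\D^*)$ I would first list every diagram that can occur in this bidegree: trivalent graphs carrying exactly one four-valent internal vertex, whose edge and vertex counts $(e,s,t)$ are pinned down by the Bott--Taubes degree identity $\deg I(\Gamma)=e(n-1)-sn-t=3n-8$ together with trivalence. After discarding the diagrams that admit an orientation-reversing automorphism (which vanish in $\D^*$), I would write the graph differential $\delta$ --- the sum over edge contractions and vertex expansions --- on the remaining short list and compute its kernel and image directly. I expect this to collapse to a one-dimensional space, with the generator $\Gamma$ an explicit signed combination of a few diagrams; this step is routine linear algebra and is not where the difficulty lies.

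That $I(\Gamma)$ is a de Rham cocycle is inherited from the Cattaneo--Cotta-Ramusino--Longoni construction: the configuration space integral carries $\delta$ to $d_{DR}$ up to the codimension-one boundary contributions of the compactified bundle over $\K$. Using $\delta\Gamma=0$ the principal (collapsing) faces cancel, so the only thing to check is that the hidden and anomalous faces contribute nothing for this particular $\Gamma$. This is exactly where oddness of $n$ enters: the integrals over the anomalous faces carry an antipodal symmetry that forces them to vanish when $n$ is odd, and I would verify that the specific hidden faces attached to the four-valent vertex also cancel in pairs.

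The heart of the argument, and the main obstacle, is the non-triviality. Budney's little $(n-1)$-cubes action on $\fK$ makes $H_*(\fK)$ a Poisson algebra whose Browder bracket raises degree by $n-2$; hence the self-bracket $[x,x]$ of the generator $x\in H_{n-3}(\K)$ lives in degree $2(n-3)+(n-2)=3n-8$, matching $I(\Gamma)$. Geometrically $[x,x]$ is represented by an $S^{n-2}$-family --- the two-point configuration of the cubes --- of pairs of sub-knots, each running over a cycle $Z_x$ for $x$, so the pairing $\langle I(\Gamma),[x,x]\rangle$ is a single configuration space integral over $S^{n-2}\times Z_x\times Z_x$. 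I would evaluate it by analysing the degeneration in which the two cubes are driven far apart in the transverse $\R^{n-1}$: a dimension count should show that every stratum of the allocation of the vertices of $\Gamma$ to the two sub-knots drops dimension and vanishes except one, on which the integrand factors as the product of two lower integrals detecting $x$ on each copy of $Z_x$ times a non-zero linking factor carried by the edges that pass through the four-valent vertex and are integrated against the separation direction $S^{n-2}$. Each factor being non-zero, the pairing is non-zero, so $I(\Gamma)\neq 0$ in $H^{3n-8}_{DR}(\K)$. Controlling this degeneration --- in particular ruling out every unwanted stratum and pinning down the sign and value of the linking factor --- is the crux. Corollary \ref{Browder_nonzero} then follows at once, since a non-zero pairing forces $[x,x]\neq 0$ and hence the Browder operation to be non-trivial.
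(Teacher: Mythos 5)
Your overall strategy---prove $I(\Gamma)\neq 0$ by pairing it with an explicit cycle produced by the operad action, then deduce the corollary from the nonvanishing of that bracket class---is exactly the paper's, but the cycle you propose does not exist, and the error traces back to the operad. Budney's action on $\fK$ in \cite{Budney03} is an action of the little \emph{2-cubes} operad $\B_2$ (with $\B_2(2)\simeq S^1$), so the induced Browder operation raises degree by exactly $1$, not by $n-2$; there is no little $(n-1)$-cubes action in \cite{Budney03}, and the corollary is specifically about the 2-cubes bracket. Consequently your self-bracket $[x,x]$ of a class $x\in H_{n-3}$ would land in degree $2(n-3)+1=2n-5$, not $3n-8$. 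Worse, the input class itself is unavailable: $H_{n-3}(\K ;\R)=0$ (the resolution of an isolated chord is null-homologous---the 1-term relation), and the only degree-$(n-3)$ class in play is the \emph{framing} class $e\in H_{n-3}(\fK)$ coming from $\Omega SO(n-1)\subset \fK$ via the clutching construction, a class that dies under $r:\fK\to\K$. The paper's cycle is the mixed bracket $\lambda(e,v_2)$ with $v_2\in H_{2(n-3)}$ the two-chord resolution class, with degree count $(n-3)+2(n-3)+1=3n-8$; and because one input is pure framing, $r_*\lambda(e,v_2)$ admits the crucial simplification ``$v_2$ rigidly rotated by the loop $e$ of rotations,'' which is what makes the evaluation tractable.

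Your evaluation scheme inherits the problem: with the actual 2-cubes action there is no $S^{n-2}$ of separation directions (the bracket family is an $S^1$ of ``pushing one knot through the other''), and a factorization into ``two lower integrals each detecting $x$'' cannot be nonzero, since there is no order-one graph cocycle pairing nontrivially with a degree-$(n-3)$ knot class. The paper instead localizes the symmetric volume form near $(\pm 1,0,\dots,0)\in S^{n-1}$, shrinks the resolution sizes to zero, shows that eight of the nine graphs in $\Gamma$ contribute zero in the limit (Lemmas \ref{1}, \ref{2}, \ref{3}, \ref{4}), and computes the surviving graph $\Gamma_2$ via an explicit two-fold branched covering $F:N\to A\subset (S^{n-1})^2$, giving $\ang{I(\Gamma_2)}{\Lambda}=\pm 1$ (Lemmas \ref{5}, \ref{6}). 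Your combinatorial plan for $H^{3,1}(\D^*)\cong\R$ and your remarks on the cochain-map property and the antipodal symmetry for odd $n$ are reasonable (the paper quotes a computer calculation and \cite{CCL02}, respectively), but the heart of your argument rests on an operad action and a homology class that do not exist, so the proof of the corollary does not go through as written.
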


The cohomology classes of $\K$ obtained from trivalent graphs can be seen as ``higher dimensional analogues'' of
the finite type invariants for knots in $\R^3$.
But the cohomology class obtained in Theorem \ref{main} is not such a one.
It would be an interesting problem to which `invariant' for knots (or 3-manifolds) our class corresponds.

When the first version of this paper was submitted, the author has not been aware of Longoni's result
\cite{Longoni04} for the space $\mathrm{Emb}\, (S^1 ,\R^n )$ of closed (ordinary) knots in $\R^n$.
Longoni found a non-trivalent graph cocycle (different from ours) when $n>3$ is even, and made a non-zero element
of $H^{3(n-3)+1}(\mathrm{Emb}\, (S^1 ,\R^n ))$ from the cocycle.
The proof is also similar to ours, that is, the evaluation of the cocycle on the dual cycle.
But the construction of the cycle naturally differs from ours, since there is no operad action on
$\mathrm{Emb}\, (S^1 ,\R^n )$.
Longoni's cycle is ``secondarily'' defined by using 4-term relations, while we use an operad action.

This paper is organized as follows.
In the second section we recall the action of little disks operad on the space of framed knots, following
\cite{Budney03}, and construct a cycle on which our cocycle will be evaluated.
The third section is devoted to reviewing the configuration space integral.
The readers familiar with \cite{CCL02} may skip this section, except for \S \ref{non_trivalent}.
In the last section we will prove Theorem \ref{main} and give a brief comment on the further computation on
$H^{k(n-3)+1}_{DR}(\K )$, $k\ge 4$.

\subsection*{Acknowledgment}

The author expresses his appreciation to Alberto Cattaneo, Ryan Budney, Fred Cohen, Toshitake Kohno, Paolo Salvatore
and Victor Turchin for reading the draft of this paper and giving him many useful advices.
He is also grateful to Ismar Voli\'c for pointing out a mistake in the previous version of this paper.
The author noticed a result of Longoni by virtue of an information in Dai Tamaki's website (which is written in
Japanese).

\section{The space of long knots and little disks action}

\subsection{The space of long knots}

In this paper we always assume $n>3$ is odd.

\begin{definition}
A {\it long knot} in dimension $n$ is an embedding
\[
 f : \R^1 \hookrightarrow \R^n
\]
such that $f(t) =(0,\dots ,0,t)$ if $\abs{t} \ge 1$.

Denote the unit ball in $\R^m$ by $B^m$;
\[
 B^m :=\{x \in \R^m \, |\, \abs{x} \le 1\} .
\]
A {\it framed long knot} in dimension $n$ is an embedding
\[
 g : B^{n-1}\times \R^1 \hookrightarrow B^{n-1}\times \R^1
\]
such that $g(x,t)=(x,t)$ if $\abs{t} \ge 1$.
Denote the space of all (framed) long knots in $\R^n$ by $\K$ (respectively $\fK$).\qed
\end{definition}

The space $\fK$ defined as above was denoted by $EC(1,B^{n-1})$ in \cite{Budney03}.
We define the framed long knots as in the cylinder, because it becomes easier in this setting to define the little
disks action.

We have a forgetting map $r : \fK \to \K$ defined by
\[
 r(f)(t)=f(0,t),\quad \forall f \in \fK .
\]

\begin{lem}[\cite{Budney03}]
The map $r$ is equivalent to a trivial fibration with fiber $\Omega SO(n-1)$.
Hence $\fK \simeq \K \times \Omega SO(n-1)$.\qed
\end{lem}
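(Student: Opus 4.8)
The plan is to exhibit $r$ as a locally trivial fiber bundle with fiber $\Omega SO(n-1)$ and then to trivialize it by producing a continuous global section. First I would identify the fiber. A framed long knot $g\in\fK$ restricts along the core $\{0\}\times\R^1$ to the underlying long knot $r(g)$, and its remaining data is precisely a trivialization of the normal bundle of $r(g)$ agreeing with the standard trivialization wherever $\abs{t}\ge 1$. Since the underlying knot is an embedding of a contractible interval, its normal bundle is trivial, so a framing measured against a fixed reference is a map $\R^1\to GL_{n-1}(\R)$ equal to the identity for $\abs{t}\ge 1$. Applying the Gram--Schmidt deformation retraction of $GL_{n-1}(\R)$ onto $O(n-1)$, and noting that the framing is standard (hence in the identity component) for $\abs{t}\ge 1$ and varies continuously in $t$, such a framing takes values in $SO(n-1)$ throughout. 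Collapsing the two standard ends to the basepoint, this is exactly a based map $(S^1,\ast)\to(SO(n-1),e)$, i.e. an element of $\Omega SO(n-1)$; thus each fiber of $r$ is homotopy equivalent to $\Omega SO(n-1)$.

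Next I would establish that $r$ is a fibration. The key tool is the parametrized tubular neighborhood (isotopy extension) theorem: given a family of long knots in $\K$, one can choose tubular neighborhoods varying continuously with the family, and thereby lift paths (and more generally disks) in $\K$ to $\fK$. This yields local triviality of $r$ over $\K$, so $r$ is a locally trivial bundle whose fiber is $\Omega SO(n-1)$ by the previous step.

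Finally, to see the bundle is \emph{trivial}, I would construct a continuous section $s:\K\to\fK$, that is, a continuous choice of framing for each long knot. Because every long knot coincides with the standard one outside $\abs{t}\le 1$ and the parameter interval is contractible, such a framing exists; starting from a frame built out of the tangent data $f'(t)$ and applying the Gram--Schmidt normalization produces a canonical orthonormal framing depending continuously on $f\in\K$ and standard near infinity. Using $s$ as a fiberwise basepoint, the assignment $g\mapsto\bigl(r(g),\ \text{framing of }g\text{ relative to }s(r(g))\bigr)$ defines a fiberwise homotopy equivalence $\fK\simeq\K\times\Omega SO(n-1)$, exhibiting $r$ as equivalent to a trivial fibration.

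I expect the step requiring the most care to be the construction of the section, equivalently a continuous trivialization of the normal bundle over all of $\K$ that is standard at infinity and reduces the structure group from $GL_{n-1}(\R)$ to $SO(n-1)$. The existence of a framing for a single knot is immediate, but making the choice canonical and continuous in the knot is the delicate point; relatedly, checking that the isotopy-extension lifts can be chosen continuously in families is exactly what promotes $r$ from a mere surjection to an honest locally trivial bundle.
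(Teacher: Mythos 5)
Your first two steps are fine and are essentially the standard setup behind the cited result (note the paper itself gives no proof of this lemma, deferring entirely to \cite{Budney03}): identifying the fiber over $f$ with the space of normal framings that are standard at infinity, hence with $\Omega SO(n-1)$ after Gram--Schmidt and tubular-neighborhood uniqueness, and obtaining the fibration property from parametrized isotopy extension. The genuine gap is your construction of the global section, which is where all the content of the lemma sits. A framing ``built out of the tangent data $f'(t)$'' by a fixed pointwise rule cannot exist: such a rule would assign to each unit vector $v\in S^{n-1}$ a completion to an orthonormal frame of $\R^n$ continuously in $v$, i.e.\ a section of the frame bundle $SO(n)\to S^{n-1}$, which exists only when $S^{n-1}$ is parallelizable. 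In the paper's range ($n>3$ odd) $S^{n-1}$ is an even-dimensional sphere with Euler characteristic $2$, so it admits no nonvanishing tangent vector field at all, let alone a frame. Concretely, Gram--Schmidt applied to $(f'(t),e_1,\dots,e_{n-1})$ degenerates exactly where $f'(t)$ lies in the hyperplane $\{x_n=0\}$, which happens for plenty of knots, and there is no continuous way to patch the choices. Similarly, your appeal to contractibility of the parameter interval only produces a framing for each \emph{fixed} knot; continuity of the choice in $f\in\K$ is precisely the triviality of the bundle being asserted, so at this point the argument is circular --- as you yourself half-suspect in your closing caveat.

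The missing idea, and the one underlying \cite{Budney03}, is that the obstruction is killed by \emph{embeddedness}. Up to the identifications in your first step, $r$ is the pullback, along the unit-tangent map $\tau:\K\to\Omega S^{n-1}$, of the looped principal bundle $\Omega SO(n)\to\Omega S^{n-1}$ (a framing of $f$ together with its tangent vector is a loop of full frames lifting $\tau_f$). The map $\tau$ is canonically null-homotopic on embeddings via the normalized chord map $(s,t)\mapsto (f(t)-f(s))/\abs{f(t)-f(s)}$, which extends over the contractible compactified configuration space of two points on the line, restricting to $\tau_f$ on the diagonal and to the constant loop on the faces at infinity; this null-homotopy is continuous in $f$, and pulling back along a null-homotopic map trivializes the bundle, giving $\fK\simeq\K\times\Omega SO(n-1)$. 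Some such global input is unavoidable: the bundle $\Omega SO(n)\to\Omega S^{n-1}$ itself has no section, since the connecting map $\pi_{n-1}S^{n-1}\to\pi_{n-2}SO(n-1)$ is the clutching class of $TS^{n-1}$, nonzero for even-dimensional spheres. Hence any framing recipe that sees only the tangent data $f'$ --- which is all an immersion has --- must fail; the chord construction is exactly where injectivity of $f$ enters, and it is the step your proposal needs.
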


\subsection{Some cycles}\label{cycles}

The subgroup $\bigoplus_{k\ge 0}H_{(n-3)k}(\K )$ is known to be non-trivial
\cite{CCL02, Longoni04, Sinha04, Tourtchine04_2} since it contains the subalgebra isomorphic to the algebra $\A$
of {\it chord diagrams} modulo {\it 4-term} and {\it 1-term} relations, and few other cycles are known.
The purpose of this paper is to find another (co)cycle which does not come from any chord diagrams.

Here we explain two examples of cycles made from chord diagrams, namely, $e \in H_{n-3}(\fK )$ and
$v_2 \in H_{2(n-3)}(\K )$ which we will use later.
For more general treatment, see \cite{CCL02, Longoni04, K06, Tourtchine04_2}.

\subsubsection{The cycle $v_2$}
Consider the chord diagram $V$ in Figures \ref{chord_diagram}, which is thought of as corresponding to an immersion
$f$ with two transversal doublepoints $z_i = f(\xi_i )=f(\xi_{i+2})$, $i=1,2$, $\xi_1 < \xi_2 < \xi_3 < \xi_4$,
see Figure \ref{blowup}.
Since we assume $n>3$, the immersion $f$ is determined uniquely up to homotopy.

\begin{figure}[hbt]
\begin{center}
\includegraphics[width=12cm, height=2cm]{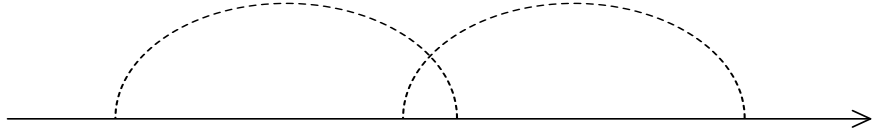}
\end{center}
\caption{Chord diagram $V$}
\label{chord_diagram}
\end{figure}

At each self-intersections $z_i$ we have resolutions of $f$ parametrized by $S^{n-3}$ (see Figure \ref{resolve}),
so we obtain a map
\[
 \alpha (V) : (S^{n-3})^2 \longrightarrow \K .
\]
More explicitly, the knot $\alpha (V)(u_1 ,u_2 )$ is defined in \cite{CCL02, Longoni04} by
\[
 \alpha (V)(u_1 ,u_2 ) (t) =
 \begin{cases}
  f(t)+\delta_i u_i \exp \left[ \frac{1}{(t-\xi_i )^2 -\varepsilon^2_i}\right]
   & \abs{t-\xi_i} < \varepsilon_i ,\ i=1,2 \\
  f(t) & \text{otherwise}
 \end{cases}
\]
where $\delta_i$ and $\varepsilon_i$ are small positive numbers, and $u_i \in S^{n-3}$ is realized as a unit vector
in $\R^n$ which is perpendicular to $f' (\xi_i )$ and $f' (\xi_{i+2})$.

\begin{figure}[hbt]
\begin{center}
\includegraphics[width=12cm,height=3cm]{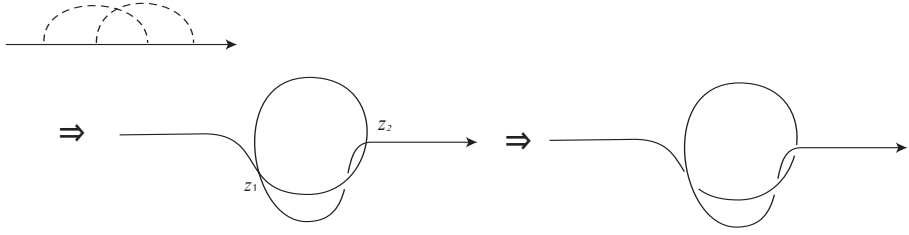}
\end{center}
\caption{The correspondence $\Gamma \mapsto f_{\Gamma} \mapsto \alpha (\Gamma )$}\label{blowup}
\end{figure}

\begin{figure}[htb]
\begin{center}
\includegraphics[width=12cm, height=3cm]{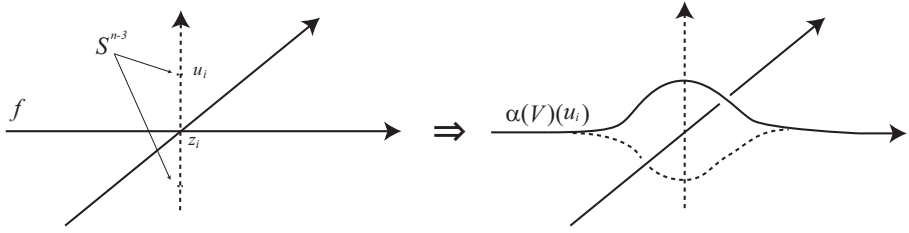}
\end{center}
\caption{Resolution of the self-intersection}\label{resolve}
\end{figure}

\begin{rem}[\cite{CCL02}]\label{linkingnumber}
The union of all the resolutions generate $S^{n-2}$;
\[
 S_i := \bigcup_{u_i \in S^{n-3}} \bigcup_{\abs{t -\xi_i} < \varepsilon_i} \alpha (V)(u_1 ,u_2)(t)\approx S^{n-2},
\]
which has the linking number one with the segment $I_i :=f(\xi_{k+i} -\varepsilon_i ,\xi_{k+i} +\varepsilon_i )$
(see Figure \ref{resolve}).\qed
\end{rem}

We regard the map $\alpha (V)$ as a $2(n-3)$-cycle of $\K$, and denote its homology class by $v_2 \in H_{2(n-3)}(\K )$
(because it can be seen as a dual of the order two invariant for knots in $\R^3$; see \cite{BottTaubes94, Kohno94}).

This construction extends to general chord diagrams.

\begin{prop}[\cite{CCL02, Longoni04, K06, Tourtchine04_2}]
The correspondence $\Gamma \mapsto \alpha (\Gamma )$ is defined for any chord diagrams and determines an injective
homomorphism of algebras
\[
 \alpha : \A \longrightarrow \bigoplus_{k\ge 0}H_{(n-3)k}(\K ),
\]
where $\A$ is an algebra generated by chord diagrams modulo {\it 4-term relations} (see Figure \ref{4t}) and
{\it 1-term relation}, that is, a chord diagram with an {\it isolated chord} (a chord which does not intersect
with other chords) is regarded as zero.\qed
\end{prop}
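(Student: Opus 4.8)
The plan is to establish three things in turn: that $\alpha(\Gamma)$ is defined for every chord diagram, that $\alpha$ descends to an algebra homomorphism on $\A$ (i.e.\ respects the product and both the $1$-term and $4$-term relations), and that it is injective.

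First I would generalize the construction of $\alpha(V)$ given above to an arbitrary chord diagram $\Gamma$ with $k$ chords. To $\Gamma$ I associate a generic immersion $f_\Gamma : \R^1 \looparrowright \R^n$ with exactly $k$ transversal double points paired according to $\Gamma$; since $n>3$ such an immersion is unique up to regular homotopy, so $f_\Gamma$ and hence $\alpha(\Gamma)$ are well defined. Resolving each of the $k$ self-intersections along its sphere $S^{n-3}$ of perpendicular directions, exactly as in the explicit formula for $\alpha(V)$, yields a map $\alpha(\Gamma) : (S^{n-3})^k \to \K$, which I regard as an $(n-3)k$-cycle. The homomorphism property with respect to the product is then the easy point: the product on $\A$ is juxtaposition of diagrams, and the corresponding immersions concatenate with the double points of the two factors lying in disjoint intervals of $\R^1$. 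The resolution parameters are therefore independent, so $\alpha(\Gamma_1\cdot\Gamma_2)$ is exactly the product of $\alpha(\Gamma_1)$ and $\alpha(\Gamma_2)$ under the Pontryagin product on $\bigoplus_k H_{(n-3)k}(\K)$ induced by concatenation of long knots.

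Next I would check that $\alpha$ annihilates the two families of relations. For the $1$-term relation, an isolated chord corresponds to a double point whose two arcs bound a small disk meeting nothing else, so the self-intersection is removable by a finger move; the entire $S^{n-3}$-family of resolutions is thereby carried to a constant family, exhibiting $\alpha(\Gamma)$ as null-homologous. For the $4$-term relation of Figure \ref{4t} I would produce an explicit nullhomology: sliding the distinguished strand across the crossing shared by the four diagrams gives a one-parameter interpolation of immersions, and resolving the remaining $k-1$ crossings along $(S^{n-3})^{k-1}$ produces an $((n-3)k+1)$-chain whose boundary is the signed sum of the four cycles $\alpha(\Gamma_i)$. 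Keeping track of the orientations of the resolution spheres across this homotopy is the delicate part of the well-definedness.

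Finally, for injectivity I would pair $\alpha$ against the configuration space integrals reviewed in \S\ref{BottTaubes}. To each chord diagram $\Gamma$ with $k$ chords there is an associated cocycle $I(\Gamma)$ of degree $(n-3)k$, and the pairing matrix $\pair{I(\Gamma)}{\alpha(\Gamma')}$ is block diagonal in $k$. Since the integrand of $I(\Gamma)$ concentrates near the diagonal configurations, I expect the integral over $\alpha(\Gamma')$ to localize along the resolution spheres $S_i$ and to reduce to a signed count of the matchings of the chords of $\Gamma$ against the double points of $\Gamma'$; by Remark \ref{linkingnumber} each matched pair contributes a linking number equal to one. The resulting matrix is, up to a universal normalization, the standard pairing of chord diagrams, which is nondegenerate on $\A$, so $\alpha$ is injective. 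I expect this last step to be the main obstacle, as it requires the full Bott--Taubes localization machinery and is really the crux of the statement.
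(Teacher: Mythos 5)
Your constructive half of the argument is consistent with what the cited sources do (note that the paper itself states this proposition without proof, quoting \cite{CCL02, Longoni04, K06, Tourtchine04_2}; its own \S\ref{ex_evaluation} only supplies the evaluation technology, Theorem \ref{CCL}). The definition of $\alpha(\Gamma)$ by resolving $k$ double points, multiplicativity under concatenation of long knots, the nullhomology for an isolated chord (the $S^{n-3}$-family of resolutions extends over a disk because nothing links the resolved strand), and the one-parameter sweep producing the $4$-term nullhomology are all the standard steps, and your sketch of them is sound modulo the orientation bookkeeping you yourself flag.

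The injectivity step, however, has a genuine gap, and it is not just "localization machinery." First, for a single chord diagram $\Gamma$ the form $I(\Gamma)$ is \emph{not closed}: $\delta\Gamma \neq 0$ in $\D^{k,0}$, as one sees already in Example \ref{ex_trivalent_cocycle}, where the chord diagram $\Gamma_1$ alone is not a cocycle and must be completed to $\Gamma_1 - \Gamma_2$ by a graph with a free vertex. So your "pairing matrix" $\ang{I(\Gamma)}{\alpha(\Gamma')}$ is simply not defined on (co)homology classes until each chord diagram is completed to a full trivalent graph cocycle. Second, even after localization what one obtains is not "the standard nondegenerate pairing of chord diagrams on $\A$" --- no such intrinsic pairing exists, since the naive matching count $\delta_{D,D'}$ does not descend to the quotient by the $4$-term relations in both arguments. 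What the localization actually gives is Theorem \ref{CCL}: evaluating $I$ of a cocycle $\sum_i a_i\Gamma_i$ against $\alpha(D)$ reads off the single coefficient $\pm a_D$. Consequently the functional $x \mapsto \ang{I(\Gamma)}{\alpha(x)}$ is the \emph{weight system} given by the chord-diagram coefficients of the cocycle $\Gamma$ (the cocycle condition is exactly what forces these coefficients to annihilate the $4$T and $1$T relations), and injectivity of $\alpha$ requires that such weight systems separate points of $\A$, i.e.\ that every linear functional on $\A_k$ is realized as the chord-diagram part of some trivalent cocycle in $\D^{k,0}$. This realization statement (essentially the STU/IHX correspondence between $\A$ and trivalent diagrams, used in \cite{CCL02}) is the nontrivial combinatorial input your proposal omits; without it, your argument only yields injectivity on the span of those diagrams dual to whatever cocycles one happens to produce.
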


\begin{figure}[hbt]
\begin{center}
\includegraphics[width=12cm, height=5cm]{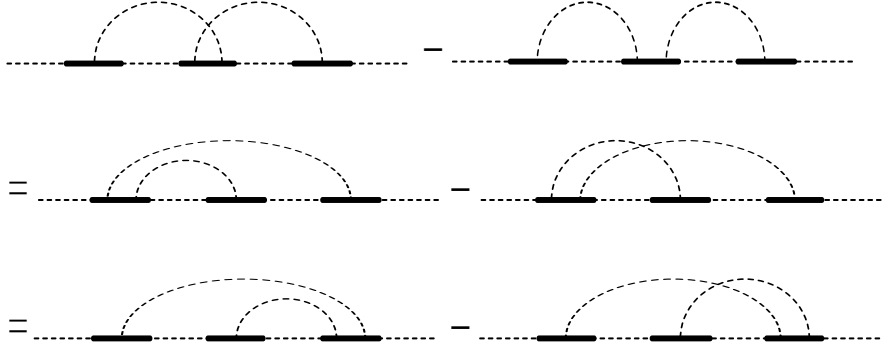}
\end{center}
\caption{4-term relations}
\label{4t}
\end{figure}

\subsubsection{The cycle $e$}
A resolution of an isolated chord yields a null-homologous cycle of $\K$
(recall the Reidemeister move I for knots in $\R^3$).
Instead we assign a homology cycle $e \in H_{n-3}(\Omega SO(n-1))$ to $\Gamma_0$, a chord diagram with only one chord.
First consider the cycle $e'$ of $SO(n-1)$ realized by a map
\[
 e' : \Sigma S^{n-3} \longrightarrow SO(n-1),
\]
called ``the clutching map for the tangent bundle of $S^{n-1}$,'' defined below;
where the suspension $\Sigma S^{n-3} \approx S^{n-2}$ is defined by collapsing the subsets
$(-\infty ,-1] \times S^{n-3}$ and $[1,\infty )\times S^{n-3}$ of $\R^1 \times S^{n-3}$ to points.
We think of $S^{n-3}$ as a unit sphere in $\R^{n-2}$ and $\Sigma S^{n-3}$ as in $\R^{n-1}$, by using an inclusion
$\Sigma S^{n-3} \to \R^{n-2} \times \R^1$,
\[
 [s,u] \longmapsto
  \begin{cases}
   (\sqrt{1-s^2 }\, u,s) & s \in [-1,1], \\
   (0,-1) & s \le -1, \\
   (0,1) & s \ge 1.
  \end{cases}
\]
The map $e' : \Sigma S^{n-3} \to SO(n-1)$ is defined by
\[
 e'[s,u]=H_{x_{n-1}}H_{[s,u]} ,
\]
where $H_{[s,u]} \in O(n-1)$ is the reflection with respect to the orthogonal complement of $[s,u] \in \R^{n-1}$
($H_{x_{n-1}}$ is the reflection with respect to the hyperplane $\{ x_{n-1}=0\}$).
Since $e'(1,u)=e'(-1,u)=I_{n-1}$, the adjoint map
\[
 e : S^{n-3} \longrightarrow \Omega SO(n-1), \quad e(u)(s) := e' [s,u]
\]
to $e'$ is defined and represents the desired cycle $[e]$.
It is known that $[e]$ determines non-trivial homology class only if $n$ is odd.

We regard $e \in H_{n-3}(\fK )$ by composing $e$ with $j:\Omega SO(n-1) \hookrightarrow \fK$ defined by
\[
 j(\gamma )(x,t)= (\gamma (t)x,t),
\]
here $\gamma (t) \in SO(n-1)$ is seen as a linear transformation in $\R^{n-1} \times \{ 0\}$.

We have described two cycles $e$ and $v_2$.
Below we will show that the Poisson bracket $\lambda (e, v_2 )$ is not zero.
The Poisson structure is induced from an action of little disks operad, and will be explained in the next subsection.

\subsection{Little disks action}\label{action}

\begin{definition}
A {\it little $m$-ball} is an embedding $b : B^m \hookrightarrow B^m$ of the form
\[
 b(x)=r(x-p)
\]
for some $p \in B^m$ and $0 < r \le 1$. Define the {\it little $m$-balls operad} $\B_m$ by setting
\[
 \B_m (k) := \left\{ (b_1 ,\dots ,b_k ) \, \left| \, {b_i \text{ a little }m\text{-ball}, \atop
 b_i (\mathrm{Int}\, B^m ) \cap b_j (\mathrm{Int}\, B^m ) = \emptyset \text{ if }i \ne j} \right. \right\}
\]
for $k \ge 1$.
The operad structure is defined in a familiar way (see \cite{May271}).\qed
\end{definition}

Here we recall the operad action of $\B_2$ on $\fK$ defined in \cite{Budney03}, that is, the ``associative'' maps
\[
 \kappa (k) : \B_2 (k) \times (\fK )^k \longrightarrow \fK ,\quad k \ge 1.
\]

Given $b=(b_1 ,\dots ,b_k ) \in \B_2 (k)$, consider the projections
\[
 I_j := pr_1 \circ b_j (B^2 ) \subset [-1,1],\quad 1 \le j \le k.
\]
There are the little 1-balls $l_j (t)=a_j t+b_j$ such that $l_j ([-1,1])=I_j$, $1 \le j \le k$
($l_1 ,\dots ,l_k$ are not necessarily disjoint mutually).

A little 1-ball $l:[-1,1] \to [-1,1]$, $l(t)=at+b$, extends to a diffeomorphism $\tilde{l} : \R^1 \to \R^1$
in an obvious way, and determines a map
\begin{gather*}
 \mu_l : \fK \longrightarrow \fK , \\
 \mu_l (f) := (\id_{B^{n-1}}\times \tilde{l}) \circ f\circ (\id_{B^{n-1}}\times \tilde{l}^{-1}).
\end{gather*}

For any little $2$-ball $b$, define the number $t_b \in [-1,1]$ by
\[
 t_b =\mathrm{min}\, \{ y \, | \, (x,y) \in b(B^2 ) \text{ for some }x \} .
\]
With these notations in hand, we can define the map $\kappa$ by
\[
 \kappa (k) ((b_1 ,\dots ,b_k );(f_1 ,\dots ,f_k )):=
 \mu_{l_{\sigma (1)}} (f_{\sigma (1)})\circ \dots \circ \mu_{l_{\sigma (k)}} (f_{\sigma (k)}),
\]
where $\sigma \in \mathfrak{S}_k$ is such that $t_{b_{\sigma (1)}} \le \dots \le t_{b_{\sigma (k)}}$.

\begin{thm}[\cite{Budney03}]
The maps $\kappa (k)$ ($k \ge 1$) are well defined and defines the action of the operad $\B_2$ on $\fK$.\qed
\end{thm}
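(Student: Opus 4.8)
The plan is to verify the three ingredients an operad action requires: that each structure map $\kappa(k)$ lands in $\fK$ and is continuous, that the formula is independent of the choices implicit in it, and that the unit, equivariance and associativity axioms hold. I would organize everything around the single-ball maps $\mu_l$. First I would check that for a little 1-ball $l$ the conjugation $\mu_l(f)=(\id\times\tilde l)\circ f\circ(\id\times\tilde l^{-1})$ is again a framed long knot: it is a composite of embeddings, and since $\tilde l$ restricts to a diffeomorphism carrying $[-1,1]$ onto $I=l([-1,1])$, a direct computation shows $\mu_l(f)(x,t)=(x,t)$ whenever $t\notin I$, so $\mu_l(f)\in\fK$ and its support lies in the open slab $B^{n-1}\times\Int I$. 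Two further properties are recorded for later: $\mu_l$ is continuous in $(l,f)$ and functorial, $\mu_{l\circ l'}=\mu_l\circ\mu_{l'}$ and $\mu_{\id}=\id$; and, because a framed long knot is the identity for $\abs{t}\ge1$, injectivity forces $\mu_l(f)$ to carry its slab into itself. Granting these, $\kappa(k)(b;f_1,\dots,f_k)$ is a finite composite of embeddings each equal to the identity outside $B^{n-1}\times[-1,1]$, hence again an element of $\fK$, and continuity in the knot variables is immediate.

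The essential point is that the formula is unambiguous and continuous in the ball variable. The only choice is the permutation $\sigma$ ordering the balls by bottom height $t_{b_j}$, and $\sigma$ is determined only up to transposing indices $i,j$ with $t_{b_i}=t_{b_j}$; continuity in $b$ likewise hinges on what happens as two heights cross. Both issues reduce to a single \emph{commutation lemma}: if $b_i$ and $b_j$ are disjoint and $t_{b_i}=t_{b_j}$, then $\mu_{l_i}(f_i)$ and $\mu_{l_j}(f_j)$ commute. I would reduce this to the \emph{dichotomy} that two disjoint little balls are either separated in the second coordinate --- in which case their bottom heights are strictly ordered and the equality $t_{b_i}=t_{b_j}$ cannot occur --- or have first-coordinate projections $I_i,I_j$ with disjoint interiors, in which case the two maps are the identity outside the disjoint open slabs $B^{n-1}\times\Int I_i$ and $B^{n-1}\times\Int I_j$ and hence commute. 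Once the lemma is in hand, reordering equal-height balls leaves the composite unchanged, and the one-sided limits as $t_{b_i}$ crosses $t_{b_j}$ agree, giving continuity across the crossing locus.

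It then remains to check the axioms. The unit axiom is clear, since $\kappa(1)$ applied to the identity ball gives $l=\id$ and $\mu_{\id}=\id$. Equivariance is built into the definition: relabelling $(b_1,\dots,b_k)$ and $(f_1,\dots,f_k)$ by the same $\tau\in\mathfrak{S}_k$ does not change the height-ordered composite, because $\sigma$ is defined symmetrically in the indices. For associativity I would compare $\kappa$ of an operadically composed configuration $\gamma(b;c_1,\dots,c_k)$ with the two-stage expression $\kappa(b;\kappa(c_1;\cdot),\dots,\kappa(c_k;\cdot))$. Functoriality of $\mu$ handles the placement: inserting the balls of $c_i$ into $b_i$ produces 1-balls of the form $l_i\circ l'$, so placing the already-composed knot by $\mu_{l_i}$ agrees with placing each constituent knot by $\mu_{l_i\circ l'}$. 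The remaining content is that the global bottom-height ordering of all inserted balls is compatible with the two-stage ordering; this again uses the dichotomy, since whenever the inserted orderings of two distinct outer balls interleave, those outer balls have overlapping second-coordinate ranges, hence disjoint first-coordinate projections, so their knot-parts have disjoint supports and commute and the interleaving is harmless.

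The hard part will be the second step. All the geometry making $\kappa$ well defined and continuous is encoded in controlling the supports of the $\mu_{l_j}(f_j)$ through the projections $I_j$ versus vertical separation, and the delicate configurations are exactly those on the codimension-one locus where two bottom heights coincide. Establishing the dichotomy is the delicate input: it is transparent for little cubes, where disjointness forces either vertical separation or disjoint first-coordinate projections, whereas for round balls two disjoint balls of different radii can share a bottom height while their projections overlap, so on the equal-height locus one must argue more carefully to rule out non-commuting knot-parts. Verifying that the commutation lemma genuinely covers every such configuration --- and, in the associativity step, that no interleaving of inserted balls can produce non-commuting knot-parts --- is where essentially all the care is needed; the unit and equivariance axioms, by contrast, are formal.
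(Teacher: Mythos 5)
Your architecture is exactly the skeleton of Budney's argument, which this paper does not reproduce (the theorem is quoted from \cite{Budney03}): confinement of the support of $\mu_l (f)$ to the slab $B^{n-1}\times \Int I$ (your injectivity argument that the slab is preserved is correct), functoriality $\mu_{l\circ l'}=\mu_l \circ \mu_{l'}$ to handle associativity, and a commutation lemma on the locus where two ordering heights coincide, with equivariance and the unit axiom being formal. All of those preliminary steps are fine.

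The genuine gap is the one you flag in your last paragraph, and it is not a matter of ``arguing more carefully'': the dichotomy you reduce everything to is \emph{false} for round balls, and so is the commutation lemma in the form you need. If two disjoint little $2$-balls have equal bottom heights, their centers are $(x_i ,t_0 +r_i )$ and disjointness only forces $\abs{x_1 -x_2}\ge 2\sqrt{r_1 r_2}$, which for $r_1 \ne r_2$ is strictly weaker than $\abs{x_1 -x_2}\ge r_1 +r_2$; e.g.\ $r_1 =1$, $r_2 =1/100$, $\abs{x_1 -x_2}=1/4$ (rescaled to fit in $B^2$) gives disjoint balls with equal bottom height whose projections $I_1 ,I_2$ overlap. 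For such a configuration the slabs overlap, $\mu_{l_1}(f_1 )$ and $\mu_{l_2}(f_2 )$ do not commute for generic $f_i$, and both your well-definedness argument at ties and your one-sided-limit argument at crossings break down --- indeed $\kappa (2)$ as literally defined with bottom-height ordering would be discontinuous along a path where bottoms cross while projections overlap, so no proof along these lines can close. The discrepancy is inherited from the paper's cosmetic switch of models: Budney works with the little \emph{cubes} operad, where disjointness of products of intervals means precisely that one coordinate projection has disjoint interiors, so your dichotomy is literally the disjointness condition and your proof (including the interleaving analysis in the associativity step, which invokes the same dichotomy) goes through verbatim. To prove the statement for $\B_2$ as defined here one must either transfer Budney's cube action along the standard equivalence of operads, or repair the ordering: sort by the $y$-coordinate of the \emph{centers} instead of the bottoms, since two disjoint balls with centers at equal heights satisfy $\abs{x_1 -x_2}\ge r_1 +r_2$, whence $I_1 ,I_2$ have disjoint interiors and your commutation lemma holds on the entire tie locus.
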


In particular, $\kappa (2): \B_2 (2) \times (\fK )^2 \to \fK$ is `pushing one long knot $f_1$ through another
long knot $f_2$, afterward pushing $f_2$ through $f_1$' (see Figure \ref{through} and Figures 2, 5, 7 in
\cite{Budney03}).

\begin{figure}[htb]
\begin{center}
\includegraphics{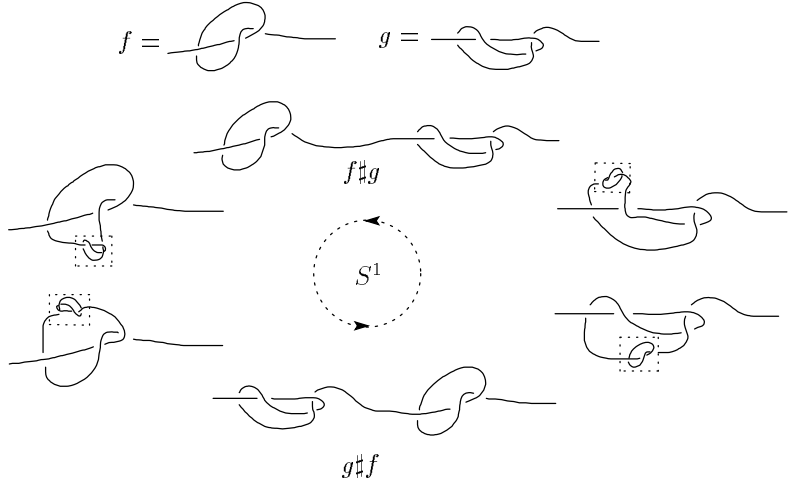}
\end{center}
\caption{A picture of $\kappa (2)$; notice that $\B_2 (2) \simeq S^1$}\label{through}
\end{figure}

The space $\B_2 (2)$ is homotopy equivalent to $S^1$.
The map $\kappa (2) : \B_2 (2) \times (\fK )^2 \to \fK$ induces on homology two products
\begin{align*}
 * : H_p (\fK ) \otimes H_q (\fK ) &\longrightarrow H_{p+q}(\fK ), \\
 \lambda : H_p (\fK ) \otimes H_q (\fK ) &\longrightarrow H_{p+q+1}(\fK )
\end{align*}
corresponding to generators of $H_p (\B_2 (2))$, $p=0,1$, respectively.
The former product is equal to that induced by the connecting sum.
The latter $\lambda$ is called {\it Browder operation} and is a {\it 1-Poisson bracket}, that is, a Lie bracket
of degree one, satisfying the Leibniz rule (see \cite{Cohen533}).

Our attention will be paid to the element $\lambda (e ,v_2 ) \in H_{3n-8}(\fK )$ or its image
$\Lambda := r_* \lambda (e ,v_2 ) \in H_{3n-8}(\K )$ via the forgetful map $r:\fK \to \K$.
For definiteness, we choose a map
\[
 v : (S^{n-3})^2 \longrightarrow \K
\]
representing $v_2$  by resolving an immersion $f$ (Figure \ref{Lambda}).
Most part of the embedding lies in the $x_{n-1}x_n$-plane.
The self-intersections to be resolved are $z_i =f(\xi_i )=f(\xi_{i+2})$, $\xi_i < \xi_{i+2}$, $i=1,2$.
The vectors $u_i \in S^{n-3}$, $i=1,2$ (which are normal to $x_{n-1}x_n$-plane) produce the resolutions of the
self-intersections $z_i$, respectively.
The segments $l$ are included in the $x_n$-axis.

Given the `trivial frame,' $v$ can represent the cycle $[v]=v_2 \in H_{2(n-3)}(\fK )$.

\begin{figure}[htb]
\begin{center}
\includegraphics{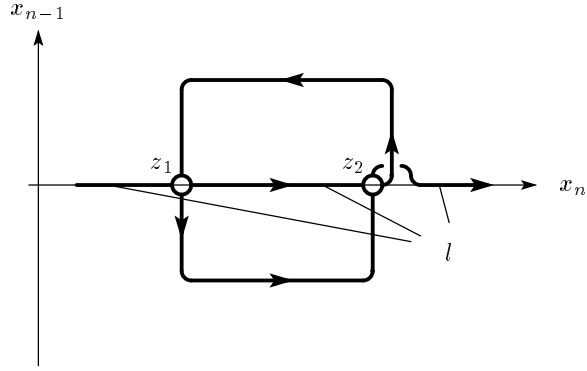}
\end{center}
\caption{The cycle $v_2$}\label{Lambda}
\end{figure}

Then the representative $\lambda (e,v)$ is the family of embeddings defined by `pushing $e$ through $v$,
afterward pushing $v$ through $e$.'

We want to know the representative of $\Lambda  =r_* (\lambda (e,v_2 )) \in H_{3n-8}(\K )$.
$\Lambda$ is obtained from $\lambda (e,v)$ by forgetting the frame.
When $e$ goes through $v$, the frame of $v$ `near' $e$ would be agitated.
But this phenomenon disappears after forgetting the frame via $r:\fK \to \K$.
In contrast, when $v$ passes through $e$, the whole embedding $v$ `rotates' around $x_n$-axis via the frame $e$,
and this phenomenon does not disappear even if we forget the frame.
Thus $\Lambda$ is represented by `$v$ rotated by $e$.'

More precisely, if we think of $SO(n-1)$ as a subgroup of $SO(n)$ fixing the $x_n$-axis, then $\Lambda$ can be
represented by the map
\begin{gather*}
 \Lambda : \Sigma S^{n-3} \times (S^{n-3})^2 \longrightarrow \K , \\
 \Lambda ([s,u_0] ,u_1 ,u_2 )(t) := e( [ 2s+p(v(u_1 ,u_2 )(t)),u_0 ])v(u_1 ,u_2 )(t),
 \quad t \in \R^1 ,
\end{gather*}
where $p : \R^n \to \R^1$ is the projection $(x_1 ,\dots ,x_n )\mapsto x_n$.
Thus $\Lambda ([s,u_0] ,u_1 ,u_2 )$ is a long knot $v(u_1 ,u_2 )$ with its intersection with
$p^{-1}(a)$ being rotated around the $x_n$-axis by the frame $e[2s+a ,u_0 ]\in SO(n-1)$, $\abs{a} \le 1$.

The cycle $\Lambda$ has a simpler description; for $0 \le \tau \le 1$, define
\begin{gather*}
 \Lambda'_{\tau} : \Sigma S^{n-3} \times (S^{n-3})^2 \longrightarrow \K , \\
 \Lambda'_{\tau} ([s,u_0] ,u_1 ,u_2 )(t) := e( [ (2-\tau )s+(1-\tau )p(v(u_1 ,u_2 )(t)),u_0 ]) v(u_1 ,u_2 )(t),
\end{gather*}
then $\Lambda'_{\tau}$ is well-defined for any $\tau \in [0,1]$, $\Lambda_0' =\Lambda$ and
\[
 \Lambda'_1 ([s,u_0] ,u_1 ,u_2 )(t) = e( [s, u_0 ])v(u_1 ,u_2 )(t).
\]
Below we rewrite $\Lambda := [\Lambda'_1 ] \in H_{3n-8}(\K )$.
This $\Lambda$ is $v(u_1 ,u_2 )$ rotated all together by $e[s,u_0 ]$.

\begin{rem}
There are several ways to define the action of $\B_2$. In \cite{Sinha04} D. Sinha constructed a cosimplicial model
for the space $\K'$ of `long knots modulo immersions,' a space which relates to $\fK$, and proved that the space is
a little disks object by means of McClure-Smith machinery \cite{McClureSmith04}.
It can be proved \cite{K06, Salvatore06} that, when $n>3$ is odd, the induced Browder operation is not zero;
\[
 \lambda : H_{n-3} (\K' ,\R ) \otimes H_{2(n-3)} (\K' ,\R ) \xrightarrow{\cong} H_{3n-8}(\K' ,\R ).
\]
It is still unknown how the operad actions on $\fK$ and $\K'$ relate to each other.
So Corollary \ref{Browder_nonzero} is the first result about the non-triviality of the Browder operation in the sense
of \cite{Budney03}.\qed
\end{rem}

\section{Configuration space integral}\label{BottTaubes}

Here we recall the main result of \cite{CCL02} when $n>3$ is odd.
For even dimensional case see \cite{CCL02}.
Readers also refer to \cite{BottTaubes94, Kohno94, Volic05}.

\subsection{Graph complex}

\begin{definition}[\cite{CCL02}]
Our {\it graph} consists of the following data.

\begin{enumerate}
\item Any graph has an oriented line called the special line.

\item A graph has two types of vertices (the set of vertices is possibly empty);
those on the special line and those not on the line.
In \cite{Volic05} the former vertices are called {\it interval} ones, while the latter {\it free}.
The vertices are labeled by $1,2,\dots ,m$ for an appropriate $m\ge 0$ so that the labels of the interval vertices
are smaller than those of free vertices.

\item Vertices are connected by oriented edges so that the graph is connected.
The valency of each vertex is at least three.
An edge may have only one interval vertex as its end-points (such an edge is called a {\it small loop}).

\item If an edge $e$ is a small loop at the interval vertex, then we give the order of the half-edges of $e$
(which is defined independently of the orientation of $e$).

\end{enumerate}
Let $\Gamma$ be a graph with $e$ edges, $v_i$ interval vertices and $v_f$ free vertices.
Define
\begin{align*}
 \ord \Gamma &:= e-v_i, \\
 \deg \Gamma &:= 2e -3v_f -v_i .\qed
\end{align*}
\end{definition}

An example of a graph is shown in Figure \ref{ex_graph}.

\begin{figure}[htb]
\begin{center}
\includegraphics[width=12cm, height=2cm]{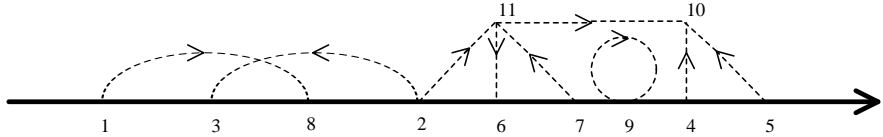}
\end{center}
\caption{An element of $\D^{7,3}$}\label{ex_graph}
\end{figure}

\begin{rem}
For any graph $\Gamma$, its order and degree are not less than zero.
One can easily prove that $1-\ord \Gamma$ is equal to the Euler characteristic of the one dimensional CW-complex
$\Gamma$, and that $\deg \Gamma$ is zero if and only if $\Gamma$ is a trivalent graph.\qed
\end{rem}

Consider the vector space spanned by the graphs with $\ord \Gamma =k$ and $\deg \Gamma =l$ modulo the subspace
generated by
\begin{enumerate}
\item $\Gamma$, two vertices of which are joined by more than one edges,
\item $\Gamma$ with a small loop whose endpoint is not a free vertex, and
\item $\Gamma^{\prime}-(-1)^{\mathrm{sign}\, \sigma}\Gamma$, here $\Gamma^{\prime}$ is obtained from $\Gamma$
by a permutation $\sigma$ which permutes the labels of the vertices (so that the labels of the interval vertices are
less than those of free vertices) or reversing the orientations of the edges.
\end{enumerate}
We denote the quotient space by $\D^{k,l}$.

The differential $\delta :\D^{k,l}\to \D^{k,l+1}$ is defined as follows.
For any graph $\Gamma$, $\delta \Gamma$ is the signed sum of graphs obtained by contracting, one at a time,
the edges one of whose endpoint is not interval, and the {\it arcs}, portions of the special line
bounded by two consecutive interval vertices.

Determining the labels and signs of the graphs after contraction (see \cite{CCL02}), we can show the following
directly by definition.

\begin{thm}[\cite{CCL02}]
The map $\delta$ sends $\D^{k,l}$ to $\D^{k,l+1}$, and $\delta^2 =0$.\qed
\end{thm}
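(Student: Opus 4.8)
The plan is to prove the two assertions separately: first that a single elementary contraction carries a graph of order $k$ and degree $l$ to an admissible graph of order $k$ and degree $l+1$, so that $\delta$ respects the bigrading and descends to the quotient $\D^{k,l}$; and then that $\delta^2=0$ by organizing $\delta^2\Gamma$ as a signed sum over pairs of contractions that cancel in pairs.

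For the grading I would treat order and degree by quite different mechanisms. Each elementary move---contracting an arc of the special line, or contracting an edge with a free endpoint---collapses a single $1$-cell joining two distinct vertices to a point, and is therefore a homotopy equivalence of the underlying one-dimensional complex; hence it preserves the Euler characteristic $\chi(\Gamma)$. By the Remark, $\ord\Gamma=1-\chi(\Gamma)$, so the order is unchanged. For the degree I would use the reformulation
\[
 \deg\Gamma=\sum_{\text{free }w}(\mathrm{val}\,w-3)+\sum_{\text{interval }w}(\mathrm{val}\,w-1),
\]
where $\mathrm{val}\,w$ counts the edges incident to $w$; a direct case check (free--free, free--interval, and arc) shows that merging the two endpoints raises this total ``excess valence'' by exactly $1$, so $\deg$ increases by $1$. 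One must also record that the contracted graph is again admissible: it stays connected, and since each merge removes at most two half-edges from a vertex whose valence was at least three, every vertex still has valence at least three.

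Well-definedness on the quotient then has to be checked against the three relations. Contractions that produce a graph with a double edge or with a small loop based at a non-free vertex are set to zero, consistently with relations (1)--(2), and one verifies that $\delta$ carries the subspace spanned by these relations into itself. The remaining point is compatibility with the orientation relation (3): $\delta$ must be defined together with a sign rule for the label of the new vertex, the induced edge orientations, and the ordering of the half-edges of any small loop, chosen so that $\delta(\Gamma')=(-1)^{\mathrm{sign}\,\sigma}\,\delta(\Gamma)$ whenever $\Gamma'$ is obtained from $\Gamma$ by the permutation $\sigma$. This is exactly the sign bookkeeping recorded in \cite{CCL02}.

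Finally, for $\delta^2=0$ I would write $\delta^2\Gamma$ as a signed sum over \emph{ordered} pairs of cells contracted one after the other and show the terms cancel in pairs. For two ``independent'' cells---not sharing a vertex, with neither contraction creating or obstructing the other---the two orders yield the same graph, and the sign convention of the previous step forces opposite signs, so they cancel. The delicate terms are the interacting pairs that share a vertex, and in particular the mixed situation where an edge contraction and an arc contraction meet at a common interval vertex; here I expect the cancellation to pair an edge--edge contribution against an edge--arc (or arc--arc) contribution, while any term whose iterated contraction forces a double edge or a forbidden small loop vanishes in the quotient. Carrying out this matching, together with the relabeling needed to keep the interval labels below the free labels at each stage---which contributes its own permutation sign---is the step I expect to be the main obstacle, and it is precisely the content deferred to \cite{CCL02}.
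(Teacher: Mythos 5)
Your proposal is correct and takes essentially the approach the paper intends: the paper offers no argument of its own beyond asserting the theorem follows ``directly by definition'' once the labels and signs after contraction are fixed as in \cite{CCL02}, and your outline (case-check that each contraction preserves $\ord$ and raises $\deg$ by one, compatibility with relations (1)--(3), then cancellation of ordered pairs of contractions for $\delta^2=0$) is precisely that deferred verification, with the sign bookkeeping honestly flagged as the part taken from \cite{CCL02}. One point in your favor worth recording: by working with $\ord\Gamma = 1-\chi(\Gamma)$, i.e.\ $e-v_f$, you quietly sidestep the paper's misprinted definition $\ord\Gamma = e-v_i$, which is \emph{not} preserved by the contractions (an arc contraction would raise it, an edge contraction would lower it), whereas $e-v_f$ is invariant and agrees with the paper's later use $e-v_f=k$.
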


\begin{ex}\label{ex_trivalent_cocycle}
Two examples of $\delta : \D^{2,0} \to \D^{2,1}$ are given in Figure \ref{cocycle_v2}.

\begin{figure}[htb]
\begin{center}
\includegraphics[width=12cm, height=2cm]{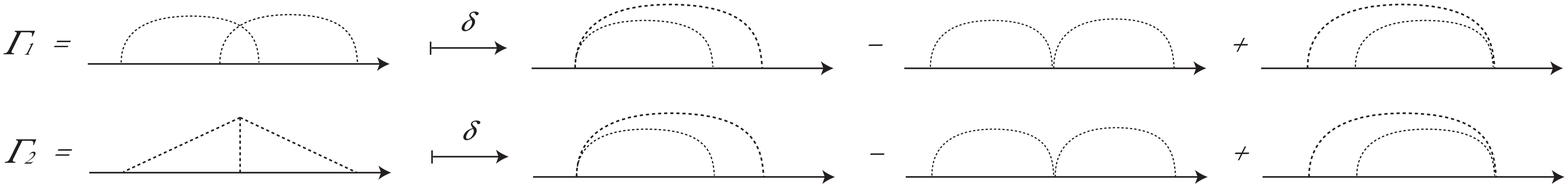}
\end{center}
\caption{Examples of the coboundary operator}\label{cocycle_v2}
\end{figure}

A cochain $\Gamma_1 -\Gamma_2$ is thus a cocycle in $\D^{2,0}$.\qed
\end{ex}

\subsection{Configuration space integrals}

Below we will associate a differential form of $\K$ with a given graph $\Gamma$.

We denote the {\it configuration space} by
\[
 \Conf (X,m) := \{ (x_1 ,\dots ,x_m ) \in X^m \, |\, x_i \ne x_j \} .
\]
For any $N$, the {\it Gauss maps} $\varphi_{ij}:\Conf (\R^N ,m)\to S^{N-1}$ ($1\le i\ne j\le m$) are defined by
\[
 \varphi_{ij}(x_1 ,\dots ,x_m )=\frac{x_i -x_j}{\abs{x_i -x_j}}.
\]
We use the compactifications of the configuration spaces.

\begin{thm}[\cite{AxelrodSinger94, BottTaubes94, Sinha03}]
For any manifold $M$, we can construct a compact manifold $\Conf [M,m]$ with corners, which is a compactification of
$\Conf (M,m)$ in the sense that the interior of $\Conf [M,m]$ is $\Conf (M,m)$.
When $M=\R^N$, then the Gauss maps $\varphi_{ij}$ can be extended smoothly onto the boundary of $\Conf [\R^N ,m]$.\qed
\end{thm}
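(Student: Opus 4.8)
The plan is to construct $\Conf [M,m]$ as the closure of an embedded copy of $\Conf (M,m)$ inside a compact space, following the Fulton--MacPherson construction in the differential-geometric form due to Axelrod--Singer and Sinha; this route has the advantage that the smooth extension of the Gauss maps is built into the definition. First I would fix a smooth embedding $M \hookrightarrow \R^L$ (for $M=\R^N$ one takes $L=N$ and the identity). Consider the smooth map
\[
 \alpha : \Conf (M,m) \longrightarrow M^m \times \prod_{i \ne j} S^{L-1} \times \prod_{i,j,k} [0,\infty]
\]
whose components are the tautological inclusion $\Conf (M,m) \hookrightarrow M^m$, the Gauss maps $\varphi_{ij}(x)=(x_i-x_j)/\abs{x_i-x_j}$ for every ordered pair $i\ne j$, and the relative-distance maps $\abs{x_i-x_j}/\abs{x_i-x_k}\in[0,\infty]$ for every relevant triple. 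Since the target is compact and $\alpha$ is a smooth embedding onto a locally closed submanifold of its interior, I define $\Conf [M,m]:=\overline{\alpha (\Conf (M,m))}$; it is compact, being a closed subset of a compact space.

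Two of the claims are then essentially immediate. For the interior statement one checks that $\alpha$ is an embedding and that its image is open in the closure, so that $\partial \Conf [M,m]:=\Conf [M,m]\setminus \alpha (\Conf (M,m))$ is exactly the added boundary and $\alpha$ identifies $\Conf (M,m)$ with the open dense interior. The smooth extension of the Gauss maps for $M=\R^N$ is then free of charge: each $\varphi_{ij}$ is, by construction, the restriction to $\Conf [\R^N,m]$ of the projection of the ambient product onto the corresponding $S^{N-1}$-factor, hence extends continuously, and ultimately smoothly, to all of $\Conf [\R^N,m]$.

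The substantial work is to equip $\Conf [M,m]$ with the structure of a smooth manifold with corners, and this is where I expect the main obstacle to lie. I would describe the boundary combinatorially: its faces are indexed by nested families of colliding subsets $S\subseteq\{1,\dots ,m\}$, a boundary point recording, for each such $S$, the limiting configuration of the points of $S$ after rescaling to unit diameter together with the limiting Gauss directions. Equivalently, one realizes $\Conf [M,m]$ by iterated spherical (real) blow-ups of $M^m$ along the diagonals, performed in order of increasing dimension of the diagonal. Near a face one introduces polar coordinates resolving each collision --- a radial coordinate $r_S\ge 0$ for the diameter of the cluster $S$ together with its normalized configuration $\theta_S$ --- and the hard part is the inductive normal form showing that independent clusters contribute independent half-line coordinates while nested clusters stratify the corner, so that these give a chart modeled on $[0,\varepsilon)^k$ times an open piece of a lower configuration space. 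Once one verifies that these charts are compatible and that $\overline{\alpha (\Conf (M,m))}$ coincides with the iterated blow-up, the Gauss and distance-ratio maps are seen to be smooth functions of the local coordinates, and all assertions of the theorem follow.
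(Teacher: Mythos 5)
Your overall route --- embed $\Conf (M,m)$ into an ambient product via the tautological inclusion, the Gauss maps and the distance-ratio maps, take the closure, and then establish the manifold-with-corners structure by iterated spherical blow-ups along the diagonals in order of increasing dimension --- is exactly the Axelrod--Singer/Sinha construction that the paper cites for this statement (the paper itself gives no proof), and your observation that the smooth extension of the $\varphi_{ij}$ then comes for free, as restrictions of ambient projections, is correct and is indeed the point of Sinha's formulation. However, there is one concrete error: your compactness argument fails in precisely the case the paper needs, $M=\R^N$. You assert that ``the target is compact,'' but the target contains the factor $M^m$, which is compact only when $M$ is; with $L=N$ and the identity embedding, the ambient space is $(\R^N )^m \times \prod_{i\ne j} S^{N-1} \times \prod [0,\infty ]$, and the closure of $\alpha (\Conf (\R^N ,m))$ in it is \emph{not} compact, since a configuration can escape to infinity without any collision, leaving the $M^m$-coordinates unbounded. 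To obtain the compact $\Conf [\R^N ,m]$ actually used in the Bott--Taubes/Cattaneo--Cotta-Ramusino--Longoni integrals one must also resolve behavior at infinity: the standard device is to regard $\R^N \subset S^N$ as the complement of a marked point $\infty$ and define $\Conf [\R^N ,m]$ inside the compactification of configurations in $S^N$ with one extra point fixed at $\infty$, so that new boundary strata record points tending to infinity together with their limiting directions and relative rates of escape. These strata are not cosmetic: they are among the codimension-one faces whose contributions must be shown to vanish in the Stokes-theorem argument behind Theorem \ref{cochain}, so omitting them would break the later parts of the paper.

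Beyond this, your deferrals are located correctly but should be flagged as carrying essentially all of the content: that $\alpha$ is an embedding with image open and dense in its closure, that the polar-coordinate charts indexed by nested families of colliding subsets $S \subseteq \{ 1,\dots ,m\}$ are mutually compatible and give local models $[0,\varepsilon )^k \times (\text{open set})$, and that the closure coincides with the iterated blow-up, are the theorems of Axelrod--Singer and Sinha rather than routine verifications. As a plan this is the right plan; as a proof, it needs the infinity strata repaired and the inductive normal form actually carried out (or cited).
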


Roughly speaking, the points in $\Conf [M,m]$ may `collide with each other,' but in such cases,
information of the directions of the collision must be recorded.

Let $\Gamma \in \D^{k,l}$ be a graph with $e$ edges, $v_i$ interval vertices and $v_f$ free vertices
(hence $e-v_f=k$, $2e-3v_f -v_i =l$).
Consider the following pull-back square:
\[
\xymatrix{
 \Conf [\R^n ;v_i ,v_f ]  \ar[d]^-p \ar[r]^-{\tilde{ev}}
  & \Conf[\R^n ,v_i +v_f ] \ar[d]^-{\pi_{v_l}} \ar[r]^-{\varphi_{ij}} & S^{n-1} \\
 \mathrm{Conf}_0 [\R^1 ,v_i]\times \K \ar[r]^-{ev} & \Conf[\R^n ,v_i ] &
}
\]
here $\mathrm{Conf}_0 [\R^1 ,m]$ is a connected component corresponding to $t_1 \le t_2 \le \dots \le t_m$ and,
on the interior, $ev$ and $\pi_*$ are defined by
\begin{align*}
 ev((t_1 ,\dots ,t_{v_i}),f) &:= (f(t_1 ),\dots ,f(t_{v_i})), \\
 \pi_{v_i}(x_1 ,\dots ,x_{v_i +v_f}) &:= (x_1 ,\dots ,x_{v_i}).
\end{align*}
$\Conf [\R^n ;v_i ,v_f ]$ is the space of pairs $((x_1 ,\dots ,x_{v_i +v_f}),f)$, where
\[
 (x_1 ,\dots ,x_{v_i +v_f})\in \Conf [\R^n ,v_i +v_f ]
\]
and $f\in \K$, with $x_1 ,\dots ,x_{v_i}$ on the knot $f$.

With an edge (or a small loop) $\overrightarrow{ij}$ of $\Gamma$, we assign a differential form
$\theta_{ij}\in \Omega^{n-1}(\Conf [\R^n ;v_i ,v_f])$ defined by
\[
 \theta_{ij} :=
  \begin{cases}
    \tilde{ev}^*\varphi^*_{ij}vol_{S^{n-1}} & i \ne j, \\
    D_i^* vol_{S^{n-1}} & i=j,
  \end{cases}
\]
here, for $1\le i\le v_i$,
\begin{gather*}
 D_i :\Conf [B^{n-1}\times \R^1 ;v_i ,v_f] \longrightarrow S^{n-1}, \\
 D_i (f(t_1 ),\dots ,f(t_{v_i}),x_1 ,\dots ,x_{v_f}) =\frac{f^{\prime}(t_i )}{\abs{f^{\prime}(t_i )}}.
\end{gather*}
We define a form $\theta_{\Gamma} \in \Omega^{(n-1)e}(\Conf [\R^n ;v_i ,v_f ])$ by
\[
 \theta_{\Gamma}:=\bigwedge_{\text{edges }\overrightarrow{ij}\text{ of }\Gamma}
 \theta_{ij}.
\]
Note that this form is defined independently of the order of $\theta_{ij}$'s, since they are even forms.

Integrating this form along the fiber of
\[
 \Conf [\R^n ;v_i ,v_f ]\stackrel{p}{\longrightarrow} \mathrm{Conf}_0 [\R^1 ,v_i]\times \K
 \stackrel{pr_2}{\longrightarrow} \K,
\]
we obtain a differential form
\[
 I(\Gamma ):=(pr_2 \circ p)_* \theta_{\Gamma}\in \Omega^* (\K ).
\]
This integral actually converges since we compactify the configuration spaces.
The degree of the form $I(\Gamma )$ is
\begin{align*}
 (n-1)e-nv_f -v_i &= (n-3)(e-v_f )+2e-3v_f -v_i  \\
 &= (n-3)k+l.
\end{align*}
Thus we have a map
\[
 I:\D^{k,l}\to \Omega^{(n-3)k+l}(\K ).
\]

\begin{thm}[\cite{CCL02}]\label{cochain}
If $n>3$ is odd, then the above map $I$ is a cochain map.
\end{thm}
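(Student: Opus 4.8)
The plan is to establish the identity $d\,I(\Gamma) = \pm\, I(\delta\Gamma)$ by applying the generalized Stokes theorem for integration along the fiber of a family whose fibers are compact manifolds with corners. Writing $q := pr_2 \circ p$ for the projection $\Conf [\R^n ;v_i ,v_f ] \to \K$, the fiberwise Stokes formula reads
\[
  d\, q_* \theta_{\Gamma} = q_*(d\theta_{\Gamma}) \pm (q|_{\partial})_* \theta_{\Gamma},
\]
where $q|_{\partial}$ denotes the restriction of $q$ to the codimension-one boundary faces of the fibers. Each $\theta_{ij}$ is the pullback of the closed volume form $vol_{S^{n-1}}$, so $\theta_{\Gamma}$ is closed and the interior term $q_*(d\theta_{\Gamma})$ vanishes. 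The whole problem is thereby reduced to identifying the boundary contribution $(q|_{\partial})_* \theta_{\Gamma}$ with $I(\delta\Gamma)$.

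First I would enumerate the codimension-one faces of the compactified fiber. In the Fulton--MacPherson--Axelrod--Singer compactification these are indexed by the subsets $S$ of the $v_i + v_f$ configuration points that collide simultaneously, together with the faces coming from points running off to infinity along the special line. I would split them into \emph{principal} faces, where exactly two points collide ($\abs{S} = 2$), and \emph{hidden} or \emph{anomalous} faces, where $\abs{S} \ge 3$ or where points escape to infinity.

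Next, the principal faces should reproduce $I(\delta\Gamma)$ term by term. When the two colliding points are joined by an edge of $\Gamma$, the corresponding face is a sphere bundle on which the Gauss form $\theta_{ij}$ of that edge restricts to the volume form of the collision sphere $S^{n-1}$; integrating it out exactly contracts that edge, matching a term of $\delta\Gamma$. When two consecutive interval vertices collide, the portion of the special line between them collapses, which is precisely the arc contraction in the definition of $\delta$. The bookkeeping of labels and orientations after contraction must agree with the sign conventions defining $\D^{k,l}$; here the hypothesis that $n$ is odd is what guarantees consistency, since $n-1$ is even (so the $\theta_{ij}$ are even forms whose wedge is order-independent) and the antipodal relation $\varphi_{ji}^*vol_{S^{n-1}} = -\varphi_{ij}^*vol_{S^{n-1}}$ matches the edge-reversal sign already imposed on the graph complex.

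The main obstacle will be proving that all the remaining faces contribute zero. For a principal face whose two colliding points are not joined by an edge, the integrand carries no form along the collision sphere, so $\theta_{\Gamma}$ pulls back from the merged configuration space and the fiber integral vanishes for degree reasons. For the hidden faces with $\abs{S} \ge 3$ I would run the standard dimension count: the relative configuration space of collision directions has dimension strictly larger than the total degree of the forms $\theta_{ij}$ supported on that face, so the integral vanishes; the delicate cases, such as collisions of three or more points all lying on the knot and the anomalous face where every point collides at once, require in addition an involution or symmetry argument forcing the integrand to be antisymmetric, again exploiting the odd parity of $n$. Finally the faces at infinity vanish because the relevant Gauss maps become locally constant there. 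Assembling the surviving principal faces then yields $d\,I(\Gamma) = \pm\, I(\delta\Gamma)$, which is the asserted cochain map property.
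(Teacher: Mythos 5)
Your proposal is correct and takes essentially the same route as the paper's own (outlined) proof: both apply Stokes' theorem for the fiber integration over the Axelrod--Singer compactification, identify the principal two-point-collision faces with the graph differential $\delta$ (edge and arc contractions), and dismiss the hidden, anomalous, and infinite faces by the standard dimension-count and symmetry arguments available when $n>3$ is odd. Your write-up in fact spells out more of the vanishing arguments and sign bookkeeping than the paper, which delegates these details to the cited references.
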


\begin{proof}[Outline of proof]
By Stokes' theorem, the differential $dI(\Gamma )$ is an integration along the boundary of
$\Conf [\R^n ;v_i ,v_f]$.
Recall \cite{AxelrodSinger94} that the boundary of $\Conf [\R^n ;v_i ,v_f ]$ is stratified via the
`complexities of degenerations of the configurations.'
The codimension one strata correspond to the simultaneous collisions of points.
We can see \cite{BottTaubes94, CCL02, Volic05} that, when $n>3$, only the `principal faces' (corresponding to the
collisions of exactly two points) contribute to the integration $dI(\Gamma )$.
These collisions exactly correspond to the differential $\delta$ of the graph complex $\D^*$, hence
$dI(\Gamma )=I(\delta \Gamma )$.
\end{proof}

\subsection{Trivalent graph cocycles}\label{ex_evaluation}

Notice that a chord diagram with $k$ chords is thought of as in $\D^{k,0}$ if some orientation of edges are given
to $\Gamma$.
The chord diagram has $e=k$ edges, $v_i =2k$ interval vertices, and no free vertices ($v_f =0$).

The proof of the following is a combinatorial one.

\begin{lem}[\cite{CCL02}]
Let $\Gamma =\sum_i a_i \Gamma_i \in \D^{k,0}$ be a non-zero cocycle with each $\Gamma_i$ trivalent graphs.
Then there is at least one graph, say $\Gamma_1$, which is a chord diagram.
Moreover, all the chord diagrams contained in the summand of $\Gamma$ has no isolated chord.\qed
\end{lem}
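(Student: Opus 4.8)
The plan is to organize the proof around the way $\delta$ changes the number of free vertices. Recall (\S\ref{BottTaubes}) that a graph in $\D^{k,l}$ satisfies $e-v_f=k$ and $2e-3v_f-v_i=l$, so for fixed $k$ the number $v_f$ of free vertices determines $e=k+v_f$ and $v_i=2k-v_f-l$. Contracting an arc of the special line leaves $v_f$ unchanged and lowers $v_i$ by one, whereas contracting an edge with a free endpoint lowers $v_f$ by one and preserves $v_i$. I would therefore write $\delta=\delta_{\mathrm{arc}}+\delta_{\mathrm{edge}}$, with $\delta_{\mathrm{arc}}$ preserving $v_f$ and $\delta_{\mathrm{edge}}$ dropping it by one, and decompose the cocycle as $\Gamma=\sum_m\Gamma^{(m)}$, where $\Gamma^{(m)}$ collects the summands with exactly $m$ free vertices. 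Reading $\delta\Gamma=0$ in the part with $m-1$ free vertices gives $\delta_{\mathrm{arc}}\Gamma^{(m-1)}+\delta_{\mathrm{edge}}\Gamma^{(m)}=0$ for every $m$.

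To produce a chord diagram I argue by contradiction. If no $\Gamma_i$ is a chord diagram then every summand has $v_f\ge 1$, so the least index $m_0$ with $\Gamma^{(m_0)}\neq0$ is positive; taking the displayed identity with $m=m_0$ and $\Gamma^{(m_0-1)}=0$ yields $\delta_{\mathrm{edge}}\Gamma^{(m_0)}=0$. The point is that $\delta_{\mathrm{edge}}$ is injective on trivalent graphs. Contracting an edge between two free vertices creates a four-valent free vertex, while contracting a ``leg'' from a free vertex to an interval vertex creates an interval vertex carrying two chords; these two kinds of degree-one graphs are distinct, so the corresponding summands $\delta_{\mathrm{int}}$ and $\delta_{\mathrm{leg}}$ of $\delta_{\mathrm{edge}}$ vanish separately and it suffices to treat $\delta_{\mathrm{leg}}$. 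I claim leg-contraction has a well-defined inverse: given a graph with a single two-chord interval vertex $x$ whose chords run to $y_1,y_2$, the only trivalent pre-image is obtained by splitting $x$ into an interval vertex and a new free vertex joined to $x,y_1,y_2$. Since a connected trivalent graph with a free vertex has at least one leg (otherwise the free part would be a separate component), and leg-contraction never produces a double edge or a small loop, each non-zero image graph has exactly one pre-image; comparing coefficients forces every $a_i$ of a non-chord-diagram $\Gamma_i$ to vanish, so $\Gamma^{(m_0)}=0$, a contradiction.

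For the second assertion I would pass to the bottom of this filtration, where the identity reads $\delta_{\mathrm{arc}}\Gamma^{(0)}=-\delta_{\mathrm{leg}}\Gamma^{(1)}$, an equality of degree-one graphs with no free vertices. Here the small-loop relation is essential: if a chord diagram $D$ in $\Gamma^{(0)}$ carries an isolated chord joining two consecutive interval vertices, then contracting the arc between them yields a small loop, which is zero in $\D$, so that term is \emph{missing} from $\delta_{\mathrm{arc}}D$. I would exploit this missing term by ordering the chord-diagram summands (say by an extremal isolated chord, or by the number of isolated chords) and showing that the contribution of a top-order $D$ to a suitable degree-one graph cannot be cancelled, neither by the second ``resolution'' of another chord diagram nor by a tripod in $\Gamma^{(1)}$, whence $a_D=0$.

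The main obstacle is precisely this last cancellation analysis. The injectivity of leg-contraction used above is clean because each image graph has a genuinely unique pre-image; but the two splittings of a two-chord interval vertex always supply two chord-diagram pre-images, which is exactly the mechanism behind the 4-term relation, so no single coefficient can be isolated from one equation. The delicate part is thus the sign bookkeeping (edge orientations and the ordering of the interval vertices) combined with an induction that separates the 4-term cancellations among $\Gamma^{(0)}$ from the leg-contractions coming from $\Gamma^{(1)}$; this is where essentially all the combinatorial difficulty of the lemma lies.
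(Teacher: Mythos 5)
Your filtration by the number of free vertices is sound, and your proof of the \emph{first} assertion essentially works. (Note the paper offers no proof at all, deferring to \cite{CCL02}, so your argument stands or falls on its own.) The splitting $\delta=\delta_{\mathrm{arc}}+\delta_{\mathrm{edge}}$ respects the grading by $v_f$, the images of $\delta_{\mathrm{int}}$ (a four-valent free vertex) and $\delta_{\mathrm{leg}}$ (an interval vertex carrying two chords) lie in independent subspaces, every connected trivalent graph with $v_f\ge 1$ has a leg, and un-contraction at the unique two-edge interval vertex is well defined and never meets the double-edge or small-loop degenerations. One point you should make explicit to finish the injectivity of $\delta_{\mathrm{leg}}$: two distinct legs of the same graph can never contract to isomorphic images, because in a trivalent graph each interval vertex carries exactly one edge, and the position of the resulting two-edge vertex in the linear order of interval vertices along the oriented special line is an isomorphism invariant; hence no internal sign cancellation can occur and each image graph appears with coefficient exactly $\pm a_i$. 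With that added, $\Gamma^{(m_0)}=0$ follows and the existence of a chord-diagram summand is proved.

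The \emph{second} assertion, however, is not established, and the mechanism you propose for it points the wrong way. You take relation (2) at face value and build the argument on the small-loop term being \emph{missing} from $\delta_{\mathrm{arc}}D$. But in the complex actually used here, small loops at interval vertices are nonzero generators: they carry the tangential form $\theta_{ii}=D_i^* vol_{S^{n-1}}$, the datum (4) ordering their half-edges exists precisely for them, and the paper's own cocycle in Figure \ref{cocycle} contains such graphs ($\Gamma_3$ and $\Gamma_4$, whose integrands have the factor $\theta_{11}$). Indeed, if all such graphs were zero, the one-chord diagram $\Gamma_0\in\D^{1,0}$ would satisfy $\delta\Gamma_0=0$ and the lemma itself would be false. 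The correct mechanism is the opposite of yours: contracting the arc spanned by a chord with \emph{adjacent} endpoints produces a small-loop graph that \emph{is present} in $\delta_{\mathrm{arc}}\Gamma^{(0)}$ and is hit by nothing else --- leg contractions always produce two distinct chords at the two-edge vertex, and un-contraction of a loop vertex into two adjacent vertices joined by a chord is unique --- so its coefficient must vanish, forcing $a_D=0$ directly for every diagram with a short chord, with no delicate cancellation analysis at all. What then remains is the passage from ``no short chord'' to ``no isolated chord'' (isolated means not crossed by any other chord; its endpoints need not be adjacent): here you must first extract the 4T relations on the coefficients from the two-distinct-chord components of $\delta_{\mathrm{arc}}\Gamma^{(0)}+\delta_{\mathrm{leg}}\Gamma^{(1)}=0$, and then invoke the standard fact that modulo 4T an isolated chord can be slid freely (the two correction terms arising when an endpoint crosses an entire chord cancel in pairs), which reduces any isolated chord to a short one. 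Your sketch contains neither the direct coefficient-vanishing step nor this reduction, and your closing admission that ``essentially all the combinatorial difficulty'' remains is accurate: as written, the proposal proves the first claim of the lemma but not the second.
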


For example, a cochain $\Gamma_1 -\Gamma_2$ given in Example \ref{ex_trivalent_cocycle} contains a chord diagram
$\Gamma_1$, and there is no isolated chord.

Let $\Gamma =\sum_i a_i \Gamma_i \in \D^{k,0}$ be a non-zero cocycle, and suppose $\Gamma_1$ is a chord diagram with
$a_1 \ne 0$.
Then $I(\Gamma ) \in H^{(n-3)k}_{DR}(\K )$ turns out to be not zero by the following theorem.

\begin{thm}[\cite{CCL02}]\label{CCL}
Denote by $\langle \, , \, \rangle$ the pairing of cocycles with cycles.
Then we have $\ang{I(\Gamma )}{\alpha (\Gamma_1 )} =\pm a_1$.\qed
\end{thm}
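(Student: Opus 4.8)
The plan is to compute the pairing $\ang{I(\Gamma)}{\alpha(\Gamma_1)}$ by exploiting the structure of the Bott–Taubes integral together with the explicit geometry of the cycle $\alpha(\Gamma_1)$. Recall that $\alpha(\Gamma_1)$ is a map from a product of spheres $(S^{n-3})^k$ into $\K$ obtained by resolving the doublepoints of the immersion $f_{\Gamma_1}$ associated to the chord diagram $\Gamma_1$. Since $\Gamma_1$ has $k$ chords, $\alpha(\Gamma_1)$ is a $(n-3)k$-cycle, which matches the degree of $I(\Gamma)$, so the pairing is a well-defined real number. First I would set up the pullback: the pairing equals the integral of $I(\Gamma)$ over $\alpha(\Gamma_1)$, which by the definition of $I(\Gamma)$ as a fiber integral unwinds to an integral of $\theta_\Gamma$ over the total space of the pullback of the configuration bundle $\Conf[\R^n;v_i,v_f]$ along $\alpha(\Gamma_1)$. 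Concretely this is an integral over a compactified space fibered over $(S^{n-3})^k$ with fiber the configurations of $v_i$ interval points and $v_f$ free points on (resp.\ near) the knot $\alpha(\Gamma_1)(u_1,\dots,u_k)$.

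The key computational idea is that the integral localizes. Because each $u_i\in S^{n-3}$ parametrizes a resolution of a doublepoint $z_i$, and because of Remark \ref{linkingnumber}, the sphere $S_i$ swept out by one resolution has linking number one with the corresponding segment $I_i$ of the knot. The heart of the argument is to match the $k$ sphere-factors $S^{n-3}$ being integrated over against the $k$ Gauss-map forms $\theta_{ij}$ coming from the $k$ chords of $\Gamma_1$. For the chord diagram $\Gamma_1$, each edge is a chord connecting the two interval vertices that map to the two preimages of a single doublepoint, so the corresponding Gauss map $\varphi_{ij}$ restricted to the relevant configurations is exactly the degree-one map $S^{n-3}\to S^{n-2}\hookrightarrow S^{n-1}$ detecting the resolution direction $u_i$. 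Summing the degrees (with signs) over the chords gives $\pm 1$ for the term $\Gamma_1$, scaled by its coefficient $a_1$.

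The main steps are then: (i) show that the integral over the pullback space receives a nonzero contribution only from the term $\Gamma_1$ among all $\Gamma_i$, i.e.\ any summand $\Gamma_i$ that is not this particular chord diagram integrates to zero against $\alpha(\Gamma_1)$; (ii) for $\Gamma_1$ itself, reduce the fiber integral by a dimension count so that the integrand is supported near the diagonal where the interval points approach the doublepoints, using the compactified configuration space to make sense of the limiting directions; and (iii) evaluate the resulting product of degree-one Gauss integrals to obtain $\pm a_1$. Step (i) is where I expect the main obstacle to lie: I would need a vanishing argument, most plausibly a dimension/degree count showing that the differential form $\theta_{\Gamma_i}$ pulled back to the $(n-3)k$-dimensional cycle $\alpha(\Gamma_1)$ cannot be top-degree unless the edge structure of $\Gamma_i$ exactly matches the chord structure of $\Gamma_1$ (any free vertex or any edge configuration not paired with a resolution direction forces the pullback form to vanish for degree reasons, since free vertices contribute factors of $\R^n$ that are not being integrated over in $\alpha(\Gamma_1)$). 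Making this vanishing rigorous, and carefully tracking the orientation signs through the compactification so that the nonzero contribution is exactly $\pm a_1$ rather than a nontrivial integer combination, is the delicate part; the rest is a localization-plus-degree computation that follows the Bott–Taubes template in \cite{BottTaubes94, CCL02}.
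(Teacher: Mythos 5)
Your endgame for the $\Gamma_1$-term (matching each chord against the resolution sphere and evaluating a product of linking numbers via Remark \ref{linkingnumber}) agrees with the paper, but your step (i) --- the vanishing of every other summand --- rests on a mechanism that does not work. You argue that $\theta_{\Gamma_i}$ pulled back to the $(n-3)k$-dimensional cycle cannot be top-degree unless the edge structure of $\Gamma_i$ matches that of $\Gamma_1$, ``since free vertices contribute factors of $\R^n$ that are not being integrated over in $\alpha(\Gamma_1)$.'' That last clause is false: the free-vertex positions are fiber directions of $p:\Conf [\R^n ;v_i ,v_f ]\to \mathrm{Conf}_0 [\R^1 ,v_i ]\times \K$, and the fiber integration $(pr_2 \circ p)_*$ in the definition of $I(\Gamma_i )$ integrates over them. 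Every trivalent graph $\Gamma_i$ appearing in the cocycle has $\ord \Gamma_i =k$ and $\deg \Gamma_i =0$, so every $I(\Gamma_i )$ is a form of exactly degree $(n-3)k$, and $\ang{I(\Gamma_i )}{\alpha (\Gamma_1 )}$ is a well-defined number with no a priori degree reason to vanish. The failure is even starker for summands $\Gamma_i$ that are chord diagrams different from $\Gamma_1$: these have the same $v_i$, the same $v_f =0$, and integrands of identical degrees, so no dimension count whatsoever can separate them from $\Gamma_1$, yet the theorem asserts their contribution is zero.

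What the paper actually uses is the freedom to shrink the resolution sizes: the homology class of $\alpha (\Gamma_1 )$, hence the pairing, is independent of the parameters $\delta_i$, so one may compute the limit $\delta_i \to 0$. The configuration space $\mathrm{Conf}_0 [\R^1 ,v_i ]$ is split into the region $C_{\varepsilon}$, where at least one doublepoint preimage $\xi_i$ has no configuration point nearby --- there the $i$-th resolution can be collapsed to size zero without any collision, the cycle drops dimension locally, and the contribution tends to zero --- and its complement, which demands a configuration point near each of the $2k$ preimages and is therefore empty unless $v_i \ge 2k$, i.e.\ unless $\Gamma_i$ is a chord diagram; mismatched chords in a chord diagram $\Gamma_i \ne \Gamma_1$ then give Gauss maps that degenerate in the limit (compare the near-diagonal argument of Lemma \ref{3}). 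This limiting argument is what your proposal is missing, and its absence also undermines your step (ii): for fixed $\delta_i$ the integrand is not supported near the doublepoints, so the localization you invoke is not available without first passing to the limit (or, as in the paper's \S 4, choosing a volume form with localized support). The template you describe is the right one, but the vanishing must come from this shrinking/localization mechanism, not from a degree count.
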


A detailed proof can be found in \cite{CCL02}.
Here, as an example, we compute $\ang{I(\Gamma )}{v_2}$ where $\Gamma =\Gamma_1 -\Gamma_2$ is a cocycle given in
Example \ref{ex_trivalent_cocycle} (notice that the cycle $v_2$ is equal to $\alpha (\Gamma_1 )$).
This computation is easily generalized to prove Theorem \ref{CCL}, and gives us a lot of useful suggestions for
the proof of our main theorem.

Let $v_i$ and $v_f$ the numbers of interval and free vertices of the graph $\Gamma_j$, $j=1,2$
(if $j=1$, then $v_i =4$ and $v_f =0$; if $j=2$, then $v_i =3$, $v_f =1$).
Consider the following pull-back square;
\[
 \xymatrix{
 (\id \times \alpha (\Gamma_1 ))^* \Conf [\R^n ;v_i ,v_f ] \ar[rr]^-{\beta} \ar[d]_-{\tilde{p}} & &
  \Conf [\R^n ;v_i ,v_f ] \ar[d]^-p \\
 \mathrm{Conf}_0 [\R^1 ,v_i ]\times (S^{n-3})^2 \ar[rr]^-{\id \times \alpha (\Gamma_1 )} \ar[d]_-{pr_2} & &
  \mathrm{Conf}_0 [\R^1 ,v_i ]\times \K \ar[d]^-{pr_2} \\
 (S^{n-3})^2 \ar[rr]^-{\alpha (\Gamma_1 )} & & \K
 }
\]
Then
\[
 \ang{I(\Gamma_j )}{\alpha (\Gamma_1 )} = \int_{(S^{n-3})^2} (pr_2 \circ \tilde{p})_* \beta^* \theta_{\Gamma_j}
 = \int_{\mathrm{Conf}_0 [\R^1 ,v_i ]\times (S^{n-3})^2} \tilde{p}_* \beta^* \theta_{\Gamma_j},
\]
and in this case the integrands are
\[
 \theta_{\Gamma_1} = \theta_{13}\theta_{24}, \quad \theta_{\Gamma_2} = \theta_{14}\theta_{24}\theta_{34}.
\]
Recall that the immersion $f$ has the transversal self-intersections
$z_i =f(\xi_i )=f(\xi_{i+2})$, $\xi_1 < \xi_2 < \xi_3 < \xi_4$.
Let $\varepsilon_i >0$ ($i=1,2$) be sufficiently small numbers appeared in the definition of the resolution of
$f$ (\S \ref{cycles}) and define the subspace $C=C_{\varepsilon} \subset \mathrm{Conf}_0 [\R^1 ,v_i ]$ by
\[
 C_{\epsilon} := \left\{ (t_1 ,\dots ,t_{v_l}) \in \mathrm{Conf}_0 [\R^1 ,v_i ] \, \left| \,
 {1 \le \exists i \le 4,\ \abs{t_m -\xi_i}>\varepsilon_i \atop \text{for any }1 \le m \le v_i} \right. \right\} .
\]
Notice that the complementary set $\mathrm{Conf}_0 [\R^1 ,v_i ] \setminus C$ is the set of configurations
such that there is at least one $t_m$ near $\xi_i$, $1 \le i \le 4$.

Write $\omega^{(j)} := \tilde{p}_* \beta^* \theta_{\Gamma_j}$ and
\[
 \omega^{(j)}_C := \int_C \omega^{(j)} , \quad
 \eta^{(j)}_C := \int_{\mathrm{Conf}_0 [\R^1 ,v_i ] \setminus C} \omega^{(j)} .
\]
Then
\[
 \ang{I(\Gamma_j )}{\alpha (\Gamma_1 )} = \int_{(S^{n-3})^2} \omega^{(j)}_C +\int_{(S^{n-3})^2} \eta^{(j)}_C .
\]
Even if we reduce the `sizes' $\delta_i$ of the resolutions of $i$-th self-intersection of the immersion $f$
(see \S \ref{cycles}), we still have a homologous cycle $\alpha (\Gamma_1 )$, hence the value
$\ang{I(\Gamma )}{\alpha (\Gamma_1 )}$ remains unchanged.
So we have
\[
 \ang{I(\Gamma )}{v_2} = \lim_{\delta_1 ,\delta_2 \to 0}
 \sum_{j=1,2}\left( \int_{(S^{n-3})^2} \omega^{(j)}_C +\int_{(S^{n-3})^2} \eta^{(j)}_C \right) .
\]
But the limit of the integration of $\omega^{(j)}_C$ is zero, since on $C$ there is at least one $\xi_i$ whose
neighborhood does not contain any configuration point $t_m$, then the size of the resolution can be reduced to
exactly zero at the corresponding doublepoint $z_k$ (because collision of configuration points never occur),
and the dimension of the cycle decreases.

Thus only the second term, the integration over $\mathrm{Conf}_0 [\R^1 ,v_i ] \setminus C$ contributes to the limit
of $\ang{I(\Gamma )}{\alpha (\Gamma_1 )}$.
Since there are four $\xi_i$'s, $\mathrm{Conf}_0 [\R^1 ,v_i ] \setminus C \ne \emptyset$ only if $v_i \ge 4$.
But $\Gamma_2$ has only three interval vertices, so cannot contribute to the pairing, while $\Gamma_1$ may
contribute to the pairing since it has four interval vertices
(in general cases, $\mathrm{Conf}_0 [\R^1 ,v_i ] \setminus C \ne \emptyset$ only for the graphs which are chord
diagrams).

So it suffices to compute the limit of
\begin{align*}
 \int_{(S^{n-3})^2} \eta^{(1)}_C
 &= \int_{(\mathrm{Conf}_0 [\R^1 ,4]\setminus C) \times (S^{n-3})^2} \tilde{p}_* \beta^* \theta_{\Gamma_1} \\
 &= \int_{\beta \tilde{p}^{-1}\{ (\mathrm{Conf}_0 [\R^1 ,4]\setminus C) \times (S^{n-3})^2 \}} \theta_{13}\theta_{24}.
\end{align*}
Recall from Remark \ref{linkingnumber} the $(n-2)$-spehre $S_i$ generated by all the resolution of $z_i$,
which has the linking number one with the segment $I_i$ ($i=1,2$).
The set
\[
 \beta \tilde{p}^{-1}\{ (\mathrm{Conf}_0 [\R^1 ,4]\setminus C) \times (S^{n-3})^2 \}
\]
is precisely the disjoint union $\bigsqcup_{i=1,2}S_i \sqcup I_i$, and the above integration is
\[
 \prod_{i=1,2} \int_{S_i \times I_i} \varphi_i^* vol_{S^{n-1}}
\]
for the Gauss map $\varphi_i : S_i \times I_i \to S^{n-1}$.
Its limit is the product of the linking numbers of $S_i$ and $I_i$ ($i=1,2$), thus equal to one.
Thus
\[
 \ang{I(\Gamma )}{\alpha (\Gamma_1 )}
 = \lim_{\delta_i \to 0}\ang{I(\Gamma_1 )}{\alpha (\Gamma_1 )} = \pm 1 .\qed
\]

\subsection{Non-trivalent graph cocycle}\label{non_trivalent}

At present it is not known in general whether the map $I : H^{k,l}(\D^* ) \to H^{(n-3)k+l}_{DR}(\K )$, $l>0$,
yields non-trivial cohomology class of $\K$.
But we can see \cite{K06, Salvatore06, Tourtchine04_2} that, when $n>3$ is odd,
\[
 \rank H_{3n-8}(\K ) = 1.
\]
So we can expect that $I : H^{3,1}(\D^* )\to H^{3n-8}_{DR}(\K )$ might produce a non-trivial cohomology class
which is dual to the generator of $H_{3n-8}(\K ,\R )$.

It is difficult to compute $H^{3,l}(\D^* )$ ($l \ge 1$) by hand, but computer calculus tells us the following.

\begin{lem}
If $n$ is odd, then $H^{3,1}(\D^* )\cong \R$.
As a generator we can choose the cochain shown as in Figure \ref{cocycle}.\qed
\end{lem}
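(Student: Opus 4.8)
The plan is to compute $H^{3,1}(\D^*)$ directly as the cohomology of the three-term complex
\[
 \D^{3,0} \xrightarrow{\delta} \D^{3,1} \xrightarrow{\delta} \D^{3,2},
\]
which makes sense because $\delta$ preserves the order and raises the degree by one. The first task is to produce explicit spanning sets for the three cochain groups. Writing $e$, $v_i$, $v_f$ for the numbers of edges, interval vertices and free vertices, the grading conditions $e-v_f=3$ (order) and $2e-3v_f-v_i=l$ (degree) force
\[
 e=v_f+3, \qquad v_f+v_i=6-l ,
\]
so an order-$3$ graph of degree $l$ has exactly $6-l$ vertices and $v_f+3$ edges. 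Together with the valence constraint read off from the degree — all vertices trivalent for $l=0$, exactly one four-valent vertex for $l=1$, and either one five-valent or two four-valent vertices for $l=2$ — and with connectivity and the rule that interval labels precede free ones, this cuts each $\D^{3,l}$ down to a finite list of isomorphism types. I would then apply the defining relations: delete every graph carrying a double edge or a small loop at an interval vertex, and, crucially for $n$ odd, delete every graph admitting a symmetry that acts by $-1$ on its orientation data (its vertex ordering together with its edge orientations), since relation (3) forces such a graph to vanish.

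The second step is to write the matrix of $\delta$ on these spanning sets. By definition $\delta\Gamma$ is the signed sum over all contractions of a single contractible edge (one endpoint free) and of a single arc of the special line between two consecutive interval vertices; each such contraction merges two vertices and raises one valence by one, so it indeed lands one degree higher. The only delicate point is bookkeeping: each contracted graph must be relabelled so that interval labels again precede free ones, and the induced sign computed according to the conventions of \cite{CCL02}. With both $\delta$-matrices in hand,
\[
 H^{3,1}(\D^*)=\ker(\delta\colon \D^{3,1}\to\D^{3,2})\,/\,\img(\delta\colon \D^{3,0}\to\D^{3,1})
\]
becomes pure linear algebra over $\R$: one computes the two ranks and checks that the quotient is one-dimensional.

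Finally I would pin down the generator. The linear algebra of the previous step simultaneously outputs $\dim H^{3,1}=1$ and an explicit representative, which one matches against the cochain $\Gamma$ drawn in Figure \ref{cocycle}. As an independent consistency test I would verify \emph{by hand} that this $\Gamma$ is a cocycle, $\delta\Gamma=0$, and is nonzero as a cochain; together with the one-dimensionality this confirms the isomorphism $H^{3,1}(\D^*)\cong\R$ and exhibits $\Gamma$ as a generator.

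The main obstacle is practical rather than conceptual. The enumeration of $\D^{3,0}$ is sizable — it mixes the order-$3$ chord diagrams with trivalent graphs carrying up to three free vertices — and the relabelling and sign rules for $\delta$ are error-prone to carry out by hand across so many graphs and over the $\R$-orientation convention for odd $n$. This is exactly why the computation is best entrusted to a computer. The one piece that can and should be checked independently is that the candidate generator of Figure \ref{cocycle} is annihilated by $\delta$, since this hand verification doubles as a check on the sign conventions underlying the whole calculation.
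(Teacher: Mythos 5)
Your overall strategy is the right one, and it is essentially what the paper does: the paper attributes $H^{3,1}(\D^*)\cong\R$ to ``computer calculus,'' and your outline (grading bookkeeping $e=v_f+3$, $v_i+v_f=6-l$, enumeration of graphs with total valence excess $l$, vanishing of graphs with sign-reversing symmetries for $n$ odd, matrices of $\delta$, rank computation) is a faithful reconstruction of what that computation must consist of. However, there is one concrete error in your setup of the cochain groups that would derail the whole calculation: you propose to ``delete every graph carrying a double edge or a small loop at an interval vertex.'' Small loops at interval vertices must be \emph{kept}: they are precisely the edges that carry the tangential form $\theta_{ii}=D_i^* vol_{S^{n-1}}$, and summands of the generator $\Gamma$ in Figure \ref{cocycle} contain them --- the paper's Lemmas \ref{2} and \ref{4} explicitly integrate $\theta_{11}=D_1^* vol_{S^{n-1}}$ over the graphs $\Gamma_4$ and $\Gamma_3$. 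You were misled by relation (2) as printed in the paper, which is a typo: read literally it contradicts the paper's own definition (clause (3) says every small loop has an interval endpoint, so relation (2) would kill all small loops and make clause (4), the half-edge ordering of small loops, vacuous). The intended relation kills loops at \emph{free} vertices, where the Gauss map $\varphi_{ii}$ is undefined. With your relation, the cochain of Figure \ref{cocycle} is not even a well-formed element of your $\D^{3,1}$, your hand-check $\delta\Gamma=0$ could not be carried out, and the linear algebra would compute the cohomology of the wrong complex.

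A secondary, smaller slip: your ``independent consistency test'' --- that $\delta\Gamma=0$ and $\Gamma\ne 0$ \emph{as a cochain} --- does not, together with $\dim H^{3,1}=1$, show that $\Gamma$ is a generator; being nonzero as a cochain does not preclude being a coboundary. Within your framework the fix is cheap (one more membership test, $\Gamma\notin\img\bigl(\delta\colon\D^{3,0}\to\D^{3,1}\bigr)$, which is again linear algebra), but it is worth noting that the paper handles this point by an entirely different mechanism: since $I$ is a cochain map (Theorem \ref{cochain}) and $\ang{I(\Gamma)}{\Lambda}\ne 0$ is proved in the Evaluation section, $\Gamma$ cannot be a coboundary. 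So the paper's certificate that $\Gamma$ generates is geometric and comes from the main theorem, whereas yours stays inside the graph complex --- both are valid once your membership test is added, and yours has the merit of keeping the lemma independent of the later evaluation. Your identification of where the hypothesis ``$n$ odd'' enters (the orientation conventions: signs under label permutations and edge reversals, hence which symmetric graphs vanish) is correct.
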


It can be easily seen by a direct computation that the cochain $\Gamma$ in Figure \ref{cocycle} is really a cocycle.
It cannot be a coboundary, since $I(\Gamma ) \in H^{3n-8}(\K )$ is not zero as we will prove later.

In Figure \ref{cocycle}, we omit the labels of the vertices and the orientations of the edges.
Unless otherwise indicated,
\begin{itemize}
\item the labels of vertices on the line are defined accordingly to the orientation of the line, and
\item the orientations of the edges are defined so that the label of the initial vertex of an edge is smaller than
that of the terminal one.
\end{itemize}

\begin{figure}[htb]
\begin{center}
\includegraphics[width=12cm, height=4cm]{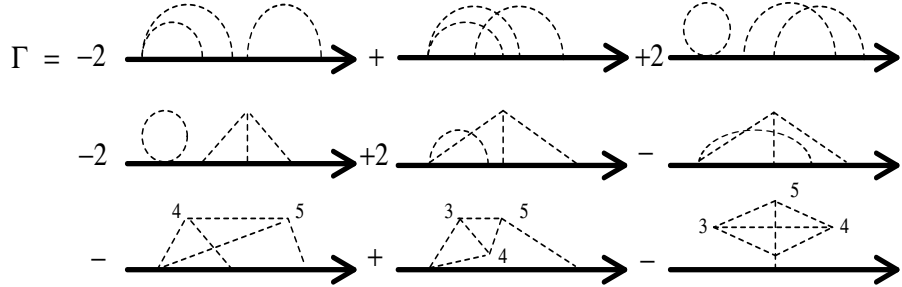}
\end{center}
\caption{a generator $\Gamma \in H^{3,1}(\D^*)$}\label{cocycle}
\end{figure}

\begin{rem}
It can be easily seen that $H^{3,l}(\D^* )=\{ 0\}$, $l \ge 4$.
The author has not computed $H^{3,l}(\D^* )$, $l=2,3$.
But Turchin's computation \cite{Tourtchine04_2} of certain spectral sequence related to $\D^*$ suggests that
$H^{3,l}(\D^* )$ might be zero for $l=2,3$.
In fact $H^{3,0}(\D^* )\cong \R$ and the Euler characteristic of the complex $\D^{3,*}$ is zero, so
$\rank H^{3,2}(\D^* )-\rank H^{3,3}(\D^* )=0$.
Thus there would be no contradiction even if $H^{3,2}(\D^* )=H^{3,3}(\D^* )=0$.
Of course it is not difficult to compute $H^{3,l}(\D^* )$, $l=2,3$, though it would be exhausting.\qed
\end{rem}

\section{Evaluation}

Suppose $n>3$ is odd and let $\Gamma \in H^{3,1}(\D^* )$ be the cocycle in Figure \ref{cocycle}.
Recall $\Lambda \in H_{3n-8}(\K )$ from \S \ref{action}.
The following theorem proves our main result.

\begin{thm}
The pairing $\ang{I(\Gamma )}{\Lambda}$ is not zero.
\end{thm}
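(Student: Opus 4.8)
The plan is to compute $\ang{I(\Gamma)}{\Lambda}$ directly, adapting the localization argument of \S\ref{ex_evaluation} (where $\ang{I(\Gamma)}{v_2}$ was evaluated) to the rotated cycle $\Lambda$ of \S\ref{action}. Writing $\Gamma = \sum_i a_i \Gamma_i$ as a signed sum of graphs, each trivalent with a single four-valent vertex, I would first express
\[
 \ang{I(\Gamma)}{\Lambda} = \sum_i a_i \int_{\Sigma S^{n-3}\times (S^{n-3})^2} \Lambda^* I(\Gamma_i),
\]
and then pull back the configuration bundle $\Conf[\R^n; v_i, v_f] \to \K$ along $\Lambda$, exactly as in the pullback square of \S\ref{ex_evaluation}. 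This turns each summand into an integral of $\theta_{\Gamma_i}$ over the pulled-back total space, fibered over $\Sigma S^{n-3}\times (S^{n-3})^2$.

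Next I would localize. Recall that $\Lambda = [\Lambda'_1]$ is the resolved immersion $v(u_1,u_2)$, with its two transversal self-intersections $z_1, z_2$, rotated rigidly by the clutching map $R = e[s,u_0] \in SO(n-1)\subset SO(n)$. Shrinking the resolution sizes $\delta_1,\delta_2 \to 0$ as in \S\ref{ex_evaluation}, the contribution of any configuration leaving some $z_i$-neighborhood free of interval points vanishes, since that resolution can be collapsed and the dimension of the supporting cycle drops. Hence the surviving configurations cluster the interval vertices in pairs at $z_1$ and $z_2$, so that the two edges joining these pairs play the role of the two chords in the example and each yields a linking-number factor $\pm 1$ via the Gauss map on the sphere $S_i$ of Remark \ref{linkingnumber}; this is the analogue of the chord-diagram selection in Theorem \ref{CCL}. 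The remaining edge, incident to the four-valent vertex, is the one that must engage the rotation.

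The genuinely new ingredient is the rigid rotation by $R = e[s,u_0]$, which is what distinguishes $\Lambda$ from $v_2$. Because the free points range over all of $\R^n$ and $R$ is an isometry, I can rotate every configuration point (interval and free alike) by $R$; this $R$-equivariant change of variables identifies the pulled-back configuration bundle over $\Lambda$ with $\Sigma S^{n-3}\times(\text{configuration bundle over }v_2)$ and turns every Gauss map into $\varphi_{ij} = e[s,u_0]\cdot\varphi^{(0)}_{ij}$ and every tangent form $D_i$ into $e[s,u_0]\cdot D^{(0)}_i$, the superscript $(0)$ denoting the data for the unrotated $v$. Integrating first over the $v$-configuration fibers reproduces the two linking-number factors of \S\ref{ex_evaluation} at $z_1$ and $z_2$, while the remaining $(n-2)$-dimensional integration over $\Sigma S^{n-3}$ is absorbed by the $e$-dependence and evaluates the degree of the clutching map $e':\Sigma S^{n-3}\to SO(n-1)$ against the Euler class of $S^{n-1}$. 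Since $n$ is odd, $n-1$ is even and $\chi(S^{n-1}) = 2\neq 0$ — exactly the condition under which $[e]$ is non-trivial — so this factor is nonzero, and the product with the two linking numbers gives $\ang{I(\Gamma)}{\Lambda}\neq 0$.

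The main obstacle will be making the ``degree of $e$ against the Euler class'' step precise and controlling which graphs survive. Concretely I expect the crux to be: first, verifying that after localization a single graph $\Gamma_i$ contributes while the others vanish by dimension (too few interval vertices to meet both doublepoints) or cancel; and second, evaluating the $\Sigma S^{n-3}$-integral as the Euler number rather than as zero, which requires tracking how the $e$-dependence distributes across the edge forms. Carefully orienting $\Sigma S^{n-3}$, $(S^{n-3})^2$ and the configuration fibers, and confirming the surviving coefficient is $\pm$ a product of linking numbers times $\chi(S^{n-1})$ and not $0$, is where the real work lies; the parity of $n$ enters decisively both through $\chi(S^{n-1})\neq 0$ and through the sign conventions that keep the $\theta_{ij}$ even forms.
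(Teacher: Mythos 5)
Your skeleton matches the paper's: pull the configuration bundle back along $\Lambda$, shrink the resolution sizes so that only configurations clustering all interval points near the $\xi_i$ survive, dispose of all graphs but one, and compute the survivor as a degree. But your identification of the surviving graph is wrong, and this matters. Each graph in the cocycle with no free vertex has three edges and \emph{five} interval vertices (order $3$, degree $1$ forces $e=3$, $v_i=5$), so after localization the five points distribute over the four sites $\xi_1,\dots,\xi_4$ with exactly one site doubled. Your picture --- two chords, each contributing a linking number, plus one ``remaining edge'' carrying the rotation --- corresponds to graphs of the type $\Gamma_1$, and precisely these vanish in the limit (Lemma \ref{4}: an edge like $\theta_{12}$ forces $t_1$ near $\xi_1$ and $t_2$ near $\xi_3$, after which no point can sit near $\xi_2$). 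The graph that actually survives is $\Gamma_2$, whose four-valent vertex carries a \emph{doubled} chord: the two edges $13,14$ run from one point on the resolution sphere $S_1$ to two points on the interval $I_1$, while the single chord $25$ at the other doublepoint contributes the one and only linking-number factor. Moreover, ``vanish by dimension or cancel'' does not dispose of the small-loop graphs $\Gamma_3,\Gamma_4,\Gamma_5$ (which have enough interval vertices) nor of $\Gamma_6$ with its free vertex; the paper kills these by fixing a symmetric volume form localized near $(\pm1,0,\dots,0)\in S^{n-1}$, so that the tangential forms $\theta_{11},\theta_{12}$ miss the support (Lemmas \ref{2}, \ref{3}, \ref{4}), plus a near-diagonal estimate for $\Gamma_6$. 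Your proposal never fixes such a form, and with an $SO(n)$-invariant one these vanishing arguments are unavailable.

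The more serious failure is your final step. If you genuinely ``integrate first over the $v$-configuration fibers'' to produce linking numbers, the result is constant in $[s,u_0]$ --- a $0$-form --- and its integral over the $(n-2)$-dimensional $\Sigma S^{n-3}$ is zero; there is no residual form degree left to ``absorb,'' so a factorization into (linking numbers) $\times$ ($\Sigma S^{n-3}$-integral) can only give $0$, and no pairing with an Euler class can emerge from it. The whole point is that the suspension coordinate must remain \emph{coupled} to the doubled-chord configuration variables: the paper assembles the single map $F:\Sigma S^{n-3}\times S^{n-2}\times \mathrm{Conf}_0[\R^1,2]\to (S^{n-1})^2$, $([s,u],P_1,(P_4,P_3))\mapsto \bigl(e[s,u]\varphi_{13},\,e[s,u]\varphi_{14}\bigr)$, whose source has dimension $(n-2)+(n-2)+2=2(n-1)$, and proves (Lemma \ref{6}) that it is a two-fold covering onto $\Int A\setminus\Delta$ with $A=\{(x,y)\,|\,x_ny_n\ge 0\}$, with orientation-preserving deck transformation; since the localized volume form puts half its mass in $A$, the integral is $\pm\tfrac12\times 2=\pm1$. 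The parity of $n$ enters through the deck transformation being essentially the antipodal map of $\Sigma S^{n-3}\approx S^{n-2}$, which preserves orientation exactly when $n$ is odd --- not through $\chi(S^{n-1})=2$, which agrees with this condition numerically but plays no role in the argument. Your equivariant change of variables $\varphi_{ij}=e[s,u_0]\cdot\varphi^{(0)}_{ij}$ is correct and is implicitly how $F$ arises, but it must feed into this coupled covering-degree computation rather than into a separated iterated integral.
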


\begin{proof}
We name the nine graphs in Figure \ref{cocycle} $\Gamma_1 ,\dots ,\Gamma_9$ respectively;
\[
 \Gamma =-2 \Gamma_1 +\Gamma_2 +2\Gamma_3 -2\Gamma_4 +2\Gamma_5 -\Gamma_6 -\Gamma_7 +\Gamma_8 -\Gamma_9 .
\]
First we remark the following fact \cite{CCL02};
the configuration space integral construction explained in \S \ref{BottTaubes} can be proceeded as long as
the volume form of $S^{n-1}$ is symmetric, that is, $i^* vol_{S^{n-1}} =-vol_{S^{n-1}}$ for the antipodal map
$i : S^{n-1} \to S^{n-1}$ (we are assuming $n$ is odd).
When $n>4$, the cohomology classes of $\K$ obtained via the configuration space integrals do not depend on the choice
of such symmetric volume forms.
So below we use the symmetric volume form whose support is localized in the (sufficiently small) neighborhood of
$(\pm 1 ,0,\dots ,0) \in S^{n-1} \subset \R^n$.

Let $\delta_i >0$ ($i=1,2$) be the `sizes' of resolutions of the self-intersections $z_i = f(\xi_i )=f(\xi_{i+2})$
($i=1,2$) of the immersion $f$ representing $v_2$ (see \S\S \ref{cycles}, \ref{action}).
We set $\delta_i =\varepsilon^2_i$, $i=1,2$ ($\varepsilon_i$ appears in the description of the resolution, see
\S \ref{cycles}).
We will compute the limit $\varepsilon_i \to 0$ of the pairing $\ang{I(\Gamma )}{\Lambda}$.
The homology class $[\Lambda ]$ is independent of the values $\varepsilon_i$, and so is the pairing.
But in the limit, as we will prove later, all the graphs $\Gamma_j$ except for $\Gamma_2$ do not contribute to
$\ang{I(\Gamma )}{\Lambda}$ (Lemmas \ref{1}, \ref{2}, \ref{3} and \ref{4}), and $\ang{I(\Gamma_2 )}{\Lambda}$
is not zero (Lemmas \ref{5}, \ref{6}).
\end{proof}

\begin{lem}\label{1}
In the limit, $\ang{I(\Gamma_j )}{\Lambda} \to 0$ for $j=7,8,9$.
\end{lem}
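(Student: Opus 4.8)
The plan is to adapt the localization computation of $\ang{I(\Gamma)}{v_2}$ in \S\ref{ex_evaluation} to the cycle $\Lambda = [\Lambda'_1]$. First I would form the analogue of the pull-back square of \S\ref{ex_evaluation}, now along $\id \times \Lambda'_1$, and write
\[
 \ang{I(\Gamma_j)}{\Lambda} = \int_{\mathrm{Conf}_0 [\R^1 ,v_i] \times \Sigma S^{n-3} \times (S^{n-3})^2} \tilde{p}_* \beta^* \theta_{\Gamma_j},
\]
since the underlying knot of $\Lambda'_1$ is the same resolution $v(u_1,u_2)$ of the immersion $f$, carrying the same four self-intersection parameters $\xi_1 < \xi_2 < \xi_3 < \xi_4$. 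I would then split $\mathrm{Conf}_0 [\R^1 ,v_i]$ into the locus $C = C_\varepsilon$, where some $\xi_i$ has no nearby interval vertex, and its complement, exactly as before. On $C$ the corresponding resolution size $\delta_i = \varepsilon_i^2$ can be shrunk to zero without colliding configuration points, so the effective dimension of the surviving family drops and that part contributes nothing as $\varepsilon_i \to 0$. Hence only configurations with interval vertices clustered at $\xi_1,\dots,\xi_4$ can survive the limit.

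The decisive extra tool is the freedom, legitimate because $n>4$ is odd, to use a symmetric volume form on $S^{n-1}$ supported in an arbitrarily small neighborhood of $(\pm 1 ,0,\dots ,0)$. With this choice each factor $\theta_{ij} = \tilde{ev}^* \varphi^*_{ij} vol_{S^{n-1}}$ is supported only where $x_i - x_j$ is nearly parallel to the first coordinate axis (and each tangent factor $D_i^* vol_{S^{n-1}}$ only where $f'(t_i)$ is). I would then use the explicit geometry of $\Lambda'_1$: the knot $v(u_1,u_2)$ lies in the $x_{n-1}x_n$-plane, with small resolution excursions in the $u_1,u_2$-directions, and the whole figure is rotated by $e([s,u_0]) \in SO(n-1)$, which acts on the first $n-1$ coordinates and fixes the $x_n$-axis. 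This pins down, for each edge of $\Gamma_j$, the narrow set of clustered configurations at which its Gauss map can meet the support of $vol_{S^{n-1}}$, and records how the rotation parameter $[s,u_0] \in \Sigma S^{n-3}$ moves those directions.

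Next I would read off from Figure \ref{cocycle} the combinatorial feature shared by $\Gamma_7,\Gamma_8,\Gamma_9$ and show it is incompatible, once the interval vertices are clustered at the $\xi_i$, with all edges simultaneously hitting the localized support. Concretely, I expect that for each of these three graphs some edge (a chord between two clustered interval vertices, or a tangent direction recorded by a $D_i$-factor) is forced into a direction that cannot be made parallel to $(\pm 1,0,\dots,0)$ in the limit, so that $\beta^*\theta_{\Gamma_j}$ vanishes identically on the surviving locus; failing that, the surviving locus has dimension strictly below the degree $(n-1)e$ of $\theta_{\Gamma_j}$, again killing the integral. In either case $\ang{I(\Gamma_j)}{\Lambda} \to 0$.

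The hard part will be this last step: correctly bookkeeping the degenerations of the compactified configuration space near the clustered locus and verifying that the relevant Gauss and tangent directions genuinely avoid $(\pm 1,0,\dots,0)$ for $\Gamma_7,\Gamma_8,\Gamma_9$, while confirming that $\Gamma_2$ escapes this fate by possessing an edge or tangent factor that both detects the two linkings at $z_1,z_2$ and sweeps out the rotation sphere $\Sigma S^{n-3}$. This is a delicate case-by-case geometric analysis driven by the specific shapes of the three graphs — in particular by how their edges attach to the interval vertices sitting at the double points as opposed to to the rotated frame — rather than a purely formal argument.
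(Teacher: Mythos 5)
Your localization setup is correct and matches the paper's opening move, but you have missed the one observation that makes this lemma immediate, and in its place you leave the decisive step as an unexecuted plan. The graphs $\Gamma_7 ,\Gamma_8 ,\Gamma_9$ are precisely the summands of $\Gamma$ with two free vertices, hence only \emph{three} interval vertices (for a graph in $\D^{3,1}$ one has $e-v_f =3$ and $2e-3v_f -v_i =1$, so $v_i =5-v_f$). After your (correct) reduction to the complement of $C_{\varepsilon}$, a surviving configuration must place an interval vertex within $\varepsilon_i$ of \emph{each} of the four distinct parameters $\xi_1 <\xi_2 <\xi_3 <\xi_4$; with only three interval vertices and the $\varepsilon_i$ small this is impossible by pigeonhole, so $\mathrm{Conf}_0 [\R^1 ,v_i ]\setminus C_{\varepsilon}=\emptyset$ and $\ang{I(\Gamma_j )}{\Lambda}\to 0$ with no further analysis. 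This counting argument --- already spelled out in \S \ref{ex_evaluation}, where $\Gamma_2$ of Example \ref{ex_trivalent_cocycle} is discarded for having only three interval vertices --- is the entirety of the paper's proof of Lemma \ref{1}.

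Your proposed ``delicate case-by-case geometric analysis'' of which Gauss and tangent directions can reach the localized support of $vol_{S^{n-1}}$ is the mechanism the paper actually deploys for the graphs with four or five interval vertices (Lemmas \ref{2}, \ref{3}, \ref{4}, \ref{5}), where the clustered locus is genuinely nonempty and something geometric must be checked. Invoking it for $j=7,8,9$ is not wrong in spirit --- any honest analysis of an empty locus yields zero --- but as written your argument is incomplete: you only say you \emph{expect} each of these graphs to have an edge forced away from the support, and you explicitly defer the verification as ``the hard part.'' Concretely, the gap is that you never counted the interval vertices of $\Gamma_7 ,\Gamma_8 ,\Gamma_9$ against the number of double-point parameters; that count is the missing, and only needed, idea, and it replaces your entire final step.
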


\begin{proof}
As in the computation in \S \ref{ex_evaluation}, only the integration over
$\mathrm{Conf}_0 [\R^1 ,v_i ] \setminus C_{\varepsilon}$ contributes to the above pairing in the limit.
So the graphs $\Gamma_j$ with less than four interval vertices never contribute to the pairing in the limit
$\varepsilon_i \to 0$.
\end{proof}

\begin{lem}\label{2}
In the limit, $\ang{I(\Gamma_j )}{\Lambda} \to 0$ for $j=4,5$.
\end{lem}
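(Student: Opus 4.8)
The plan is to run the localization argument of \S\ref{ex_evaluation} and of the proof of Lemma \ref{1} for the cycle $\Lambda$, and then to exhibit a degenerate Gauss factor in $\theta_{\Gamma_4}$ and $\theta_{\Gamma_5}$. Exactly as in Lemma \ref{1}, in the limit $\varepsilon_i \to 0$ only the integration over $\mathrm{Conf}_0[\R^1,v_i]\setminus C_{\varepsilon}$ survives, because wherever some $\xi_i$ has no nearby configuration point the corresponding resolution can be shrunk to zero and the cycle drops dimension. Since $\Gamma_4$ and $\Gamma_5$ each have exactly four interval vertices, this pins the four interval vertices, in order, into the $\varepsilon_i$-neighborhoods of $\xi_1<\xi_2<\xi_3<\xi_4$; that is, the $m$-th interval vertex is forced to $\xi_m$ as $\varepsilon_i\to 0$. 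First I would record the pairing in the form $\ang{I(\Gamma_j)}{\Lambda}=\lim_{\varepsilon_i\to0}\int_{\Sigma S^{n-3}\times(S^{n-3})^2}\eta^{(j)}_{C}$ and identify the limiting face of the configuration space, on which the interval vertices together with $u_1,u_2$ sweep out the spheres $S_i$ and segments $I_i$ of Remark \ref{linkingnumber}.

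The key step is then a rank argument. Recall from \S\ref{action} that $\Lambda([s,u_0],u_1,u_2)(t)=e([s,u_0])\,v(u_1,u_2)(t)$ rotates $v$ rigidly by $e([s,u_0])\in SO(n-1)$, and that the two double points $z_1=f(\xi_1)=f(\xi_3)$ and $z_2=f(\xi_2)=f(\xi_4)$ are distinct. Suppose an edge of $\Gamma_j$ joins two interval vertices pinned to \emph{different} double points (one near $\xi_1$ or $\xi_3$, the other near $\xi_2$ or $\xi_4$). Because the resolution bumps have size $\delta_i=\varepsilon_i^2$ and collapse, the two endpoints converge to $e([s,u_0])z_1$ and $e([s,u_0])z_2$ independently of $u_1,u_2$ and of the precise interval parameters, so the associated Gauss map converges to
\[
 \frac{e([s,u_0])(z_1-z_2)}{\abs{z_1-z_2}}\in S^{n-1},
\]
which depends only on $[s,u_0]\in\Sigma S^{n-3}$. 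Thus, on the limiting face, this Gauss map factors through a space of dimension $n-2$, its differential has rank $\le n-2<n-1$, and the pulled-back $(n-1)$-form $\theta_{ij}=\tilde{ev}^*\varphi_{ij}^*vol_{S^{n-1}}$ restricts to zero. Since $\theta_{ij}$ is a wedge factor of $\theta_{\Gamma_j}$, the whole integrand $\beta^*\theta_{\Gamma_j}$ vanishes there, whence $\eta^{(j)}_{C}\to 0$ and $\ang{I(\Gamma_j)}{\Lambda}\to 0$.

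What remains, and this is where the main obstacle lies, is the combinatorial verification that $\Gamma_4$ and $\Gamma_5$ really contain such a cross-double-point edge. Reading off Figure \ref{cocycle}, I would match the labels of the interval vertices to the double points of $\Lambda$ and check that in each graph some interval–interval edge joins vertices whose labels do not form one of the pairs $\{1,3\}$ or $\{2,4\}$; this is precisely the feature that distinguishes the vanishing graphs of this lemma from the surviving graph $\Gamma_2$, whose interval–interval edges all lie within a single double point (Lemmas \ref{5}, \ref{6}). The one analytic point to guard against is that the rank drop is not compensated by a hidden blow-up direction in the compactification $\Conf[\R^n;v_i,v_f]$; but since the two endpoints of a cross-edge remain a bounded distance apart, no collision and hence no blow-up occurs, the Gauss map is genuinely smooth with the stated limit, and once the offending edge is identified the argument closes with only a routine dimension count.
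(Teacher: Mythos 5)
Your localization step and your degeneration mechanism are fine as far as they go --- the quantitative version of your rank argument is exactly the $\varepsilon'$-estimate the paper runs for $\Gamma_6$ in Lemma \ref{3} --- but the combinatorial verification you postpone to the end is not a routine check: it fails for $\Gamma_4$. A graph in $\D^{3,1}$ with four interval vertices has $e=4$ and one free vertex, and the free vertex must be at least trivalent; since $\Gamma_4$ carries a small loop at the interval vertex $1$, its three remaining edges are forced to join the free vertex to the vertices $2,3,4$. So $\Gamma_4$ has \emph{no} interval--interval edge at all, let alone one joining vertices pinned to different double points; the only degenerate factor available is the small-loop form $\theta_{11}=D_1^* vol_{S^{n-1}}$, which your criterion explicitly excludes. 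That is precisely the factor the paper attacks: with the symmetric volume form chosen (at the start of the main proof) to have support near $(\pm 1,0,\dots ,0)$, the tangent direction $D_1$ at $t_1$ with $\abs{t_1 -\xi_1}\le \varepsilon_1$ stays outside the support --- note the rigid rotation $e[s,u_0 ]\in SO(n-1)$ preserves the $x_n$-coordinate, so it cannot push this direction toward the $x_1$-poles --- hence $\theta_{11}$ vanishes identically on $\mathrm{Conf}_0 [\R^1 ,4]\setminus C_{\varepsilon}$ at every small $\varepsilon$, with no limiting argument needed. If you want to stay inside your framework, the repair is to extend the rank argument to small loops: as $\varepsilon_1 \to 0$ the direction $D_1$ converges in $C^1$ to $e[s,u_0 ]f'(\xi_1 )/\abs{f'(\xi_1 )}$, which again factors through the $(n-2)$-dimensional $\Sigma S^{n-3}$ (and is even constant if, as in Figure \ref{Lambda}, the strand at $\xi_1$ lies along the fixed $x_n$-axis), so the same dimension count kills $\theta_{11}$ in the limit. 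As written, though, your proof covers only $\Gamma_5$.

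For $\Gamma_5$ your argument is correct and genuinely different from the paper's: the paper again exploits the localized volume form --- the chord direction between $t_1 \approx \xi_1$ and $t_2 \approx \xi_2$ is close to $e[s,u_0 ](z_1 -z_2 )/\abs{z_1 -z_2}$, whose $x_n$-coordinate is fixed and nonzero under $e[s,u_0 ]$, so $\theta_{12}$ is identically zero on the relevant region at every finite $\varepsilon$ --- whereas your rank argument gives vanishing only in the limit but has the advantage of being independent of the choice of symmetric volume form. If you take that route, state the estimate at finite $\varepsilon$ (the pulled-back factor equals $\varphi_{\infty}^* vol_{S^{n-1}}$ plus an error of order $\varepsilon$, with $\varphi_{\infty}$ of rank at most $n-2$), rather than asserting vanishing ``on the limiting face,'' since at finite $\varepsilon$ the integration takes place in the interior of the compactification.
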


\begin{proof}
The graphs $\Gamma_4$ and $\Gamma_5$ have four vertices on the special line.
So the corresponding points $(t_1 ,\dots ,t_4 )$ is in $\mathrm{Conf}_0 [\R^1 ,4] \setminus C_{\varepsilon}$
if and only if $\abs{t_i -\xi_i} \le \varepsilon_i$, $1 \le i \le 4$.
Then in the case of $\Gamma_4$, the integrand $\theta_{11}$ is zero since we take the immersion $f$ so that
$D_1 f(t_1 )$ with $\abs{t_1 -\xi_1} \le \varepsilon_1$ cannot be near $(\pm 1,0,\dots ,0)$,
the support of our volume form.
In the case of $\Gamma_5$, the integrand $\theta_{12}$ also vanishes by similar reason.
\end{proof}

\begin{lem}\label{3}
In the limit, $\ang{I(\Gamma_6 )}{\Lambda} \to 0$.
\end{lem}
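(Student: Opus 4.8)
The plan is to argue as in Lemma \ref{2}, but applied to a different factor of the integrand, using crucially that the symmetric volume form $vol_{S^{n-1}}$ is supported in a small neighborhood of $(\pm 1 ,0 ,\dots ,0)$ and that every frame $e([s,u_0]) \in SO(n-1)$ defining $\Lambda$ fixes the $x_n$-axis. First I would localize exactly as in \S \ref{ex_evaluation}: only the integral over $\mathrm{Conf}_0 [\R^1 ,4] \setminus C_\varepsilon$ survives in the limit, and since $\Gamma_6$ has four interval vertices, in the limit $\varepsilon_i \to 0$ each configuration point $t_m$ is pinned at a distinct $\xi_i$. It then suffices to show that $\theta_{\Gamma_6}$ vanishes identically on this pinned region.

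The mechanism I would exploit is that $e([s,u_0])$ preserves the $x_n$-component of every vector, whereas any unit vector in the support of $vol_{S^{n-1}}$ near $(\pm 1 ,0 ,\dots ,0)$ has $x_n$-component close to zero. I would single out the edge of $\Gamma_6$ that stays \emph{rigid} in the limit: a chord $\overrightarrow{ab}$ joining two interval vertices pinned over the two \emph{distinct} double points $z_1 ,z_2$ of $f$. As $\varepsilon_i \to 0$ the resolution displacements, of size $\delta_i =\varepsilon_i^2$, tend to zero, so the direction of this chord converges to the fixed unit vector $(z_1 -z_2 )/\abs{z_1 -z_2}$, which lies in the $x_{n-1}x_n$-plane with $x_n$-component bounded away from zero. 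As this component is left unchanged by every $e([s,u_0])$, the associated Gauss image stays in a compact subset of $S^{n-1}$ disjoint from the support of $vol_{S^{n-1}}$. Hence $\theta_{ab}$, and with it $\theta_{\Gamma_6}$, vanishes on the pinned region, so $\ang{I(\Gamma_6 )}{\Lambda} \to 0$.

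The main obstacle will be to isolate this rigid edge correctly from the combinatorics of $\Gamma_6$. The point is delicate because a chord joining two vertices pinned over the \emph{same} double point has a direction that sweeps the entire sphere $S_i \approx S^{n-2}$ and is therefore \emph{not} confined; only a chord over \emph{distinct} double points converges to a single direction. I would therefore first confirm that $\Gamma_6$ contains such a chord, then establish a uniform positive lower bound on the modulus of its $x_n$-component over the compact parameter space $\Sigma S^{n-3} \times (S^{n-3})^2$, and finally check that the $O(\varepsilon_i^2)$ perturbations do not erode this bound as $\varepsilon_i \to 0$. If it should turn out that $\Gamma_6$ has no single rigid edge of this kind, the fallback is a dimension count showing that the localized image of the integration domain under the Gauss maps of $\Gamma_6$ has dimension strictly less than $(n-1)$ times the number of edges, so that the top-degree form $\theta_{\Gamma_6}$ restricts to zero there.
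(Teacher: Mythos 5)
Your localization step is correct and matches the paper: since $\Gamma_6$ has four interval vertices, in the limit each $t_m$ is pinned near $\xi_m$, and only $\mathrm{Conf}_0 [\R^1 ,4] \setminus C_{\varepsilon}$ matters. But your main mechanism then fails, at exactly the delicate point you flagged yourself. The edges of $\Gamma_6$ are $13, 15, 25, 45$: three edges join the free vertex $5$ to the interval vertices $1,2,4$, and the \emph{only} chord between interval vertices is $13$. With $t_1$ pinned near $\xi_1$ and $t_3$ near $\xi_3$, this chord joins points near $f(\xi_1 )=f(\xi_3 )=z_1$, i.e.\ over the \emph{same} double point; as the resolution parameter $u_1$ varies, its Gauss direction sweeps the sphere $S_1 \approx S^{n-2}$ (indeed this is precisely the edge that produces a linking number in the $\Gamma_2$ computation of Lemma \ref{5}). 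So it is not confined away from the support of $vol_{S^{n-1}}$, and the rigid edge your argument needs simply does not exist in $\Gamma_6$. Your proposed "first confirm that $\Gamma_6$ contains such a chord" check would come back negative.

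Your fallback is not yet a proof either: at any finite $\varepsilon$ the integrand $\theta_{\Gamma_6}$ does not literally restrict to zero on the pinned region, so a dimension count of a fixed image cannot close the argument; the vanishing occurs only in the limit and requires a uniform estimate. The actual mechanism, which your proposal never engages, runs through the free vertex. The three edges $15,25,45$ force $x_5$ to be near $(\pm \infty ,0,\dots ,0)$ if all three forms are to meet the localized support simultaneously. Then, as $\varepsilon_2 \to 0$, the points $f(t_2 ) \in S_2$ and $f(t_4 ) \in I_2$ collide at $z_2$ while $x_5$ stays far away, so the two Gauss maps $\varphi_{25}\circ \tilde{ev}$ and $\varphi_{45}\circ \tilde{ev}$ agree up to an error $\varepsilon' \to 0$; writing
\[
 \theta_{45}=(\varphi_{25} \circ \tilde{ev} +\varepsilon' \varphi )^* vol_{S^{n-1}}
\]
shows the leading term of $\theta_{25}\wedge \theta_{45}$ is the pullback of $vol_{S^{n-1}}\wedge vol_{S^{n-1}}=0$ under a single map, so the integrand is $O(\varepsilon' )$ and the pairing $\ang{I(\Gamma_6 )}{\Lambda}$ tends to zero. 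Your fallback gestures at this degeneration-onto-the-diagonal phenomenon, but without identifying the colliding pair $(t_2 ,t_4 )$, the role of the far-away free point, and the uniform $\varepsilon'$ control, the limit-vanishing claim remains unsupported.
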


\begin{proof}
The points $(t_1 ,\dots ,t_4 ) \in \mathrm{Conf}_0 [\R^1 ,4] \setminus C_{\varepsilon}$ corresponding to interval
vertices should be as in the above Lemma.
So the integrand $\theta_{15}\theta_{25}\theta_{45}$ vanishes unless the point $x_5$ corresponding to the free vertex
$5$ is `near $(\pm \infty ,0,\dots ,0)$,' since otherwise the images of $\varphi_{i5} \circ \tilde{ev}$, $i=1,2,4$,
cannot be in the support of $vol_{S^{n-1}}$ simultaneously.

Now we look at two maps $\varphi_{i5} \circ \tilde{ev}$, $i=2,4$.
In the limit $\varepsilon_2 \to 0$, the points $f(t_2 ) \in S_2$ and $f(t_4 ) \in I_2$ are very near
($S_2 \approx S^{n-2}$ and the interval $I_2$ have been introduced in Remark \ref{linkingnumber}), and the free point
$x_5$ has to be far from them.
So the image of the map
\[
 (\varphi_{25} \circ \tilde{ev}) \times (\varphi_{45} \circ\tilde{ev}) :
 S_2 \times I_2 \longrightarrow (S^{n-1})^2
\]
is near the diagonal set $\Delta =\{ (v,v) \in (S^{n-1})^2\}$.
More precisely, for any open neighborhood $U$ of $\Delta$, there exists $\epsilon_0 >0$ such that the image of
$(\varphi_{25} \circ \tilde{ev}) \times (\varphi_{45} \circ\tilde{ev})$ is contained in $U$ for any
$\varepsilon_2 <\varepsilon_0$.

Thus on $\mathrm{Conf}_0 [\R^1 ,4] \setminus C_{\varepsilon}$ the integrand $\theta_{45}$ can be written as
\[
 \theta_{45}=(\varphi_{25} \circ \tilde{ev} +\varepsilon' \varphi )^* vol_{S^{n-1}}
\]
for the Gauss map $\varphi : S_2 \times I_2 \to S^{n-1}$ and some $\varepsilon' >0$ such that
$\varepsilon \xrightarrow{\varepsilon_2 \downarrow 0} 0$.
Hence the integrand is
\[
 \theta_{13}\theta_{15}\theta_{25}\theta_{45} =\varepsilon' \theta_{13}\theta_{15}\theta_{25}\varphi^* (vol_{S^{n-1}})
\]
and its integration converges to zero as $\varepsilon_2 \to 0$.
\end{proof}

\begin{lem}\label{4}
In the limit, $\ang{I(\Gamma_j )}{\Lambda} \to 0$ for $j=1,3$.
\end{lem}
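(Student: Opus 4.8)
The plan is to follow the same mechanism established in Lemmas \ref{1}--\ref{3}: identify which configuration of interval points survives the limit, and show that some edge-form in $\theta_{\Gamma_j}$ is forced to vanish. For $\Gamma_1$ and $\Gamma_3$ we have four interval vertices (so the integration over $\mathrm{Conf}_0 [\R^1 ,4] \setminus C_{\varepsilon}$ does not automatically die as in Lemma \ref{1}) and no small loops (so the trick of Lemma \ref{2} does not directly apply). Thus the points $(t_1 ,\dots ,t_4 )$ that contribute in the limit must satisfy $\abs{t_i -\xi_i} \le \varepsilon_i$, $1 \le i \le 4$, exactly as in Lemmas \ref{2} and \ref{3}; in particular $f(t_1 ), f(t_3 ) \in S_1$ are close and $f(t_2 ), f(t_4 ) \in S_2$ are close in the limit $\varepsilon_i \to 0$.

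First I would read off the integrand $\theta_{\Gamma_j}$ from Figure \ref{cocycle} for $j=1,3$ and locate an edge whose Gauss map cannot land in the localized support of $vol_{S^{n-1}}$ near $(\pm 1,0,\dots ,0)$. The natural candidate is an edge joining two interval vertices that lie on the \emph{same} resolved self-intersection (i.e. both $t_1 ,t_3$ near $\xi_1$, or both $t_2 ,t_4$ near $\xi_2$). For such an edge $\overrightarrow{ij}$, as $\varepsilon_i \to 0$ the two endpoints $f(t_i )$ and $f(t_j )$ collide along $S_1$ (respectively $S_2$), so the direction $\varphi_{ij}$ approaches a tangent direction to the knot near that doublepoint, which — because $f$ lies in the $x_{n-1}x_n$-plane away from the resolving directions — stays away from $(\pm 1,0,\dots ,0)$; hence $\theta_{ij}$ vanishes on the relevant stratum. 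I expect exactly this configuration to occur in both $\Gamma_1$ and $\Gamma_3$, distinguishing them from the chord diagram $\Gamma_2$, whose two edges each connect points on \emph{different} self-intersections and therefore genuinely detect the two independent linking numbers.

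Concretely, I would argue as follows. By the same reduction used repeatedly above, in the limit only the stratum $\abs{t_i -\xi_i}\le \varepsilon_i$ contributes, and on it at least one edge-form $\theta_{ij}$ has both endpoints forced onto a single $S_k$; the Gauss direction of that edge is then pushed outside the support of the localized symmetric volume form, so $\theta_{ij}=0$ there. Consequently $\theta_{\Gamma_j}$ vanishes identically on the contributing stratum for $j=1,3$, and $\ang{I(\Gamma_j )}{\Lambda} \to 0$. If instead the offending collision is merely \emph{near}-diagonal rather than exactly forced out of the support — as happened in Lemma \ref{3} — I would fall back on the perturbation estimate used there, writing the offending form as $(\psi +\varepsilon' \varphi)^* vol_{S^{n-1}}$ for an auxiliary Gauss map $\varphi$ and a factor $\varepsilon' \to 0$, so that the integral acquires a vanishing prefactor.

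The main obstacle is verifying that $\Gamma_1$ and $\Gamma_3$ really do contain such a same-intersection edge once the vertex labels and orientations are fixed by the conventions stated after Figure \ref{cocycle}; this is a bookkeeping step that depends on the precise pictures, and it is conceivable that for one of them the vanishing is of the softer, near-diagonal type rather than the hard support type, forcing the Lemma \ref{3} estimate instead of a clean vanishing. I would therefore treat $\Gamma_1$ and $\Gamma_3$ separately if their edge structures differ, applying whichever of the two vanishing mechanisms the integrand demands.
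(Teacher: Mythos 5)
Your proposal identifies the right surviving stratum ($t_i \approx \xi_i$ for $1\le i\le 4$, forced by the covering condition together with the ordering $t_1 \le \dots \le t_4$), but the vanishing mechanism you build on it is geometrically backwards. In this construction, an edge whose two endpoints lie at the \emph{same} resolved double point --- one on the sphere $S_i$, one on the segment $I_i$ --- is precisely the kind that does \emph{not} vanish: by Remark \ref{linkingnumber} its Gauss map sweeps $S^{n-1}$ with degree $\pm 1$, hence does meet the localized support near $(\pm 1,0,\dots ,0)$ and records a linking number. (The limit direction between such a pair is not a tangent direction of the knot: the two points lie on different branches through $z_i$, and the resolving displacement $\delta_i u_i$ lets the normalized difference range over a whole sphere of directions.) This is exactly why $\Gamma_2$, whose integrand $\theta_{13}\theta_{14}\theta_{25}$ has \emph{every} edge joining the two branches of a single double point ($t_1 \approx \xi_1$ with $t_3 ,t_4 \approx \xi_3$ at $z_1$; $t_2 \approx \xi_2$ with $t_5 \approx \xi_4$ at $z_2$), is the one term that survives in Lemma \ref{5}; your criterion, applied consistently, would kill it, and your description of $\Gamma_2$ as having edges between \emph{different} self-intersections is inverted. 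What actually vanishes is the opposite configuration: an edge joining points near two different double points has Gauss direction pinned near $(z_1 -z_2 )/\abs{z_1 -z_2}$, which lies in the $x_{n-1}x_n$-plane, away from the support. Note also that your candidate configurations (``both $t_1 ,t_3$ near $\xi_1$'') are incompatible with the covering constraint you state two sentences earlier.

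The paper's actual proof uses the corrected dichotomy, and differently for the two graphs. For $\Gamma_1$: the factor $\theta_{12}$ is supported only where $t_1$ and $t_2$ resolve the same double point, i.e.\ $t_1$ near $\xi_1$ and $t_2$ near $\xi_3$; but then $t_3 ,t_4 \ge t_2$ leave $\xi_2$ uncovered, so the configuration lies in $C_{\varepsilon}$ and its contribution dies in the limit. The vanishing is thus an interplay between the edge's support and the ordering of interval vertices, not a pointwise vanishing of some $\theta_{ij}$ on the surviving stratum --- indeed on that stratum $t_1 \approx \xi_1$ and $t_2 \approx \xi_2$ sit at different double points, where your stated rule would predict a nonzero, ``linking-number-detecting'' contribution. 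For $\Gamma_3$: contrary to your premise that neither graph has a small loop, $\Gamma_3$ carries a small loop at vertex $1$, and the factor $\theta_{11} =D_1^* vol_{S^{n-1}}$ vanishes identically on the contributing region because the tangent direction of $f$ near $\xi_1$ avoids $(\pm 1,0,\dots ,0)$ --- exactly the Lemma \ref{2} mechanism you set aside as inapplicable. Your fallback to the near-diagonal estimate of Lemma \ref{3} does not repair either half, since the issue in both cases is which configurations carry the support of the integrand, not a vanishing prefactor.
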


\begin{proof}
First we prove $\ang{I(\Gamma_1 )}{\Lambda} \to 0$; the integrand $\theta_{12}$ is not zero only if
$t\in \mathrm{Conf}_0 [\R^1 ,4] \setminus C$ is such that $t_1$ is near $\xi_1$ and $t_2$ is near $\xi_3$.
But then no other $t_i$ can be near $\xi_2$.

$\ang{I(\Gamma_3 )}{\Lambda} \to 0$ since if $t\in \mathrm{Conf}_0 [\R^1 ,4] \setminus C$ then $t_1 \le \xi_1$ and
thus $\theta_{11}=D_1^* vol$ is always zero by our choice of $f$.
\end{proof}

\begin{lem}\label{5}
The limit of $\ang{I(\Gamma_2 )}{\Lambda}$ is not zero.
\end{lem}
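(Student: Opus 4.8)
The plan is to repeat the localization-and-factorization computation of \S\ref{ex_evaluation}, now for the rotated cycle $\Lambda$, and to read off the answer as a product of a linking number and an Euler number. First I would restrict the pairing to the locus where the configuration points cluster at the four crossing-parameters: exactly as in \S\ref{ex_evaluation}, on $\mathrm{Conf}_0[\R^1,v_i]\setminus C_\varepsilon$ every interval parameter $t_m$ must lie within $\varepsilon_i$ of some $\xi_i$, whereas on the complement one of the two resolutions can be shrunk to zero, dropping a dimension and killing that part of the integral in the limit $\varepsilon_i\to 0$. So only the integration over $\mathrm{Conf}_0[\R^1,v_i]\setminus C_\varepsilon$ survives.

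The new ingredient is the rotation. Since $e'[s,u_0]\in SO(n-1)$ is orthogonal and the Gauss map is normalization-invariant, every edge of $\Gamma_2$ joining two points of the knot has $\varphi_{ij}\circ\tilde{ev}=e'[s,u_0]\,\varphi_{ij}(v(u_1,u_2))$; that is, the rotation acts on the corresponding copy of $S^{n-1}$ by $e'[s,u_0]$. I would use this to split $\theta_{\Gamma_2}$ into the part detecting $z_2$ and the part detecting $z_1$ together with the suspension coordinate $[s,u_0]$. The $z_2$-part is literally the situation of \S\ref{ex_evaluation}: the resolution sphere $S_2$ links the segment $I_2$ once (Remark \ref{linkingnumber}), so this factor limits to $1$ and pulls out of the integral.

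It remains to evaluate the $z_1$-part integrated against $[s,u_0]\in\Sigma S^{n-3}$, the free vertex supplying the missing dimensions. This is precisely where the choice of $e'$ matters: by construction $e'[s,u]=H_{x_{n-1}}H_{[s,u]}$ is the clutching map of $TS^{n-1}$, so combining the degree-one sweep of $S_1$ over $S^{n-1}$ with the rotation over $\Sigma S^{n-3}$ presents this factor as the degree of a map onto $S^{n-1}$ equal to the Euler number $\chi(S^{n-1})=1+(-1)^{n-1}$. As $n$ is odd this is $2\neq 0$, and multiplying by the $z_2$-linking factor gives $\lim_{\varepsilon_i\to 0}\ang{I(\Gamma_2)}{\Lambda}\neq 0$.

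The main obstacle is this last step: showing that the rotation coordinate and the free vertex produce the Euler number rather than zero. Concretely one must orient $S_1$, $\Sigma S^{n-3}$ and the free-vertex fiber consistently so that the two reflections in $e'[s,u]=H_{x_{n-1}}H_{[s,u]}$ reinforce rather than cancel, and confirm that the localized integrand is, after extracting the $z_2$-linking factor, the pullback of $vol_{S^{n-1}}$ under a map of net degree $\chi(S^{n-1})$. The analytic convergence is already guaranteed by the compactification and by the localization above, so this orientation-and-degree bookkeeping is the whole difficulty; I would isolate it as Lemma \ref{6}.
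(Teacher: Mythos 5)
Your first two steps coincide with the paper's argument: in the limit $\varepsilon_i\to 0$ only the integration over $\mathrm{Conf}_0[\R^1,v_i]\setminus C_{\varepsilon}$ survives, and since $\Lambda=\Lambda'_1$ is rotated \emph{globally} by $e[s,u_0]\in SO(n-1)$, the Gauss map for any knot-to-knot edge is rotated by the same isometry, so the integral of $\theta_{25}$ over $t_2,t_5,u_2$ is the rotation-invariant linking number of $S_2$ with $I_2$, which equals $1$ by Remark \ref{linkingnumber} and pulls out. This is exactly how the paper isolates the $z_1$-part.

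The gap is in your final step, which you yourself flag as the whole difficulty. You propose to extract one more linking factor (``the degree-one sweep of $S_1$'') from $\theta_{13}\theta_{14}$ and present the remainder as a map onto a \emph{single} $S^{n-1}$ of degree $\chi(S^{n-1})=2$. That factorization is not available: $\theta_{13}$ and $\theta_{14}$ both depend on the shared variables $P_1=f(t_1)\in S_1$ and $[s,u_0]\in\Sigma S^{n-3}$, and the fiberwise pushforward of a product of forms is not the product of their integrals, so no sub-integration yields a constant that pulls out. The dimension count already shows the correct target is $(S^{n-1})^2$: the surviving variables $\bigl( [s,u_0],\,P_1,\,(P_4,P_3)\bigr)$ form the $(2n-2)$-manifold $N=\Sigma S^{n-3}\times S^{n-2}\times\mathrm{Conf}_0[\R^1,2]$, and the paper keeps both Gauss maps together as one map $F:N\to(S^{n-1})^2$. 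Crucially, the image of $F$ is only the half-region $A=\{x_ny_n\ge 0\}$ (because $e[s,u_0]P_3$, $e[s,u_0]P_4$ lie on a common line and $P_1$ on a sphere centered between them), and Lemma \ref{6} shows $F$ is a two-fold cover of $\Int A\setminus\Delta$ whose two sheets $\pm[s,u]$ are coherently oriented because the antipodal map of $\Sigma S^{n-3}\approx S^{n-2}$ preserves orientation for $n$ odd; with the localized symmetric volume form this gives $\pm\tfrac12\times 2=\pm 1$. Your Euler-number heuristic does identify the right mechanism --- the ``$2$'' is precisely the two frames $[s,u]$, $[-s,-u]$ carrying the $x_{n-1}$-axis to $l(v_3,v_4)$, and the $n$-odd parity is your ``reflections reinforce'' condition --- but as stated it misidentifies the target sphere and misses the half-coverage constraint (whence your $2$ versus the paper's $\pm 1$). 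For the qualitative non-vanishing the constant is immaterial, but the reduction you sketch would not survive being made precise; the repair is exactly the paper's Lemma \ref{6}.
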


\begin{proof}
The integrand $\theta_{13}\theta_{14}\theta_{25}$ does not vanish only if the
$t \in \mathrm{Conf}_0 (\R^1 ,5) \setminus C$ is such that $t_1$ is near $\xi_1$, $t_3 ,t_4$ are near $\xi_3$,
$t_2$ is near $\xi_2$ and $t_5$ is near $\xi_4$.
Integration with respect to $t_2 ,t_5$ and $u_2 \in S^{n-3}$ is the linking number, and hence equals to one
(Remark \ref{linkingnumber}).
So it remains to compute the integration with respect to $t_1 ,t_3 ,t_4$ and $u_1 \in S^{n-3}$.
We reformulate the situation around $z_1$ as follows (see Figure \ref{M});
\begin{itemize}
\item a point $P_1$ (corresponding to $f(t_1 )$) is on
\[
 M := \{ x^2_1 +\dots +x^2_{n-2}+x^2_n =1,\  x_{n-1} =0 \}
\]
(this sphere corresponds to $S_1 \approx S^{n-2}$ introduced in \S \ref{ex_evaluation}),
\item two points $(P_4 ,P_3 )\in \mathrm{Conf}_0 [\R^1 ,2]$ (corresponding to $f(t_4 ),f(t_3 )$) are on the
$x_{n-1}$-axis (corresponding to the interval $I_1$), and
\item the frame $e[s,u]\in SO(n-1)$ ($[s,u]\in \Sigma S^{n-3}$) acts on $\R^n$, fixing the $x_n$-axis.
\end{itemize}

\begin{figure}[htb]
\begin{center}
\includegraphics{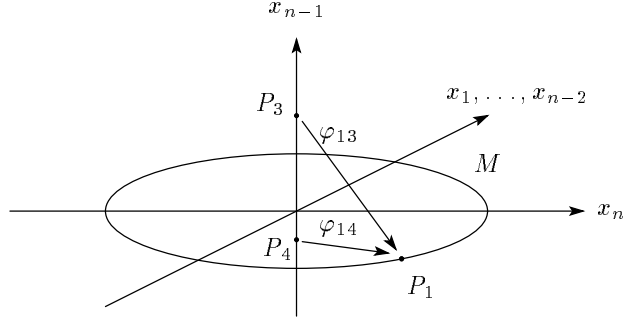}
\end{center}
\caption{integration around $z_1$}\label{M}
\end{figure}

If we define
\[
 F : N \longrightarrow (S^{n-1})^2 \quad (N:=\Sigma S^{n-3} \times S^{n-2} \times \mathrm{Conf}_0 [\R^1 ,2])
\]
by
\begin{align*}
 F([s,u],P_1 ,(P_4 ,P_3 )) &:= (\varphi_{13},\varphi_{14})( e[s,u]P_i )_{i=1,3,4} \\
 &= \left( e[s,u] \frac{P_1 -P_3}{\abs{P_1 -P_3}},\, e[s,u] \frac{P_1 -P_4}{\abs{P_1 -P_4}} \right),
\end{align*}
then our aim is to compute the integral
\[
 \int_{\Sigma S^{n-3} \times S^{n-2} \times \mathrm{Conf}_0 [\R^1 ,2]} F^* (vol_{(S^{n-1})^2})
\]
where $vol_{(S^{n-1})^2} = pr_1^* vol_{S^{n-1}} \wedge pr_2^* vol_{S^{n-1}}$ is a top form of $(S^{n-1})^2$
(remember the support of $vol_{S^{n-1}}$ is localized in the neighborhood of $(\pm 1,0,\dots ,0)$).

The map $F$ has its image in
\[
 A := \{ (x,y)\in (S^{n-1})^2 \, |\, x_n y_n \ge 0 \}
\]
where $x_n$ is the $n$-th coordinate of $x\in S^{n-1} \subset \R^n$.
In fact, as we will prove in Lemma \ref{6}, the map $F$ is two-fold covering on $\Int A$ branched on the diagonal
$\Delta = \{ (v,v)\in (S^{n-1})^2 \}$, thus the image of $F$ covers the half of the support of $vol_{(S^{n-1})^2}$
twice.
Moreover, for any $p,q \in F^{-1}(\Int A \setminus \Delta )$ with $F(p)=F(q)$, the map $G: T_p N \to T_q N$
defined by the commutative diagram
\[
 \xymatrix{
  T_p N \ar[dr]^-{F_*}_-{\cong} \ar[rr]^-{G}_-{\cong} & & T_q N \ar[ld]^-{\cong}_-{F_*} \\
   & T_{F(p)}(S^{n-1})^2 &
 }
\]
is an isomorphism of oriented tangent spaces for suitable orientations of $N$ and $(S^{n-1})^2$.
Thus the limit of the above integral is $\pm \frac{1}{2} \times 2 =\pm 1$.
\end{proof}

\begin{lem}\label{6}
$F|_{\Int N}$ is two-fold smooth covering onto $\Int A \setminus \Delta$ and $G$ is an orientation preserving map.
\end{lem}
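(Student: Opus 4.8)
The plan is to exhibit the two sheets of the covering as the orbits of an explicit free involution of $N$, and to track the hypothesis that $n$ is odd through to the orientation computation. First I would observe that the frame $e[s,u]=H_{x_{n-1}}H_{[s,u]}$ depends only on the \emph{line} spanned by the image of $[s,u]$ under the embedding $\Sigma S^{n-3}\hookrightarrow\R^{n-1}$, since the reflection $H_{[s,u]}$ is insensitive to the sign of its normal vector. Under that embedding the involution $\iota\colon[s,u]\mapsto[-s,-u]$ becomes the antipodal map $v\mapsto -v$ on the unit sphere $\Sigma S^{n-3}\approx S^{n-2}\subset\R^{n-1}$; hence $\iota$ is free and $e[\iota(s,u)]=e[s,u]$. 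Setting $\Phi:=\iota\times\id\times\id$ on $N$ this gives $F\circ\Phi=F$, so the fibres of $F$ are unions of $\Phi$-orbits and $F$ is at least two-to-one.

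The main step is to show that over $\Int A\setminus\Delta$ the fibre of $F$ is \emph{exactly} one $\Phi$-orbit and that $F$ is there a local diffeomorphism. For this I would use the splitting $\R^n=\R^{n-2}\oplus\R_{x_{n-1}}\oplus\R_{x_n}$, writing $P_1=(P_1',0,p_n)$ on $M$ and $P_3,P_4$ on the $x_{n-1}$-axis, so the two input unit vectors are $\frac{(P_1',-a_i,p_n)}{\sqrt{1+a_i^2}}$ for $i=3,4$. Since $e[s,u]$ fixes the $x_n$-axis and is orthogonal, a prescribed target $(x,y)$ forces $x_n=p_n/\sqrt{1+a_3^2}$ and $y_n=p_n/\sqrt{1+a_4^2}$ (which both explains $x_ny_n\ge0$ and pins $\operatorname{sign}p_n=\operatorname{sign}x_n$), together with $\ang{x}{y}=(1+a_3a_4)/\sqrt{(1+a_3^2)(1+a_4^2)}$. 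These relations determine $p_n$, $a_3$ and $a_4$ up to the discrete choices fixed by the ordering $a_4<a_3$ and the sign of $p_n$; the remaining data $P_1$ and the rotation plane of $e[s,u]$ are then determined modulo the single ambiguity $\iota$. Carrying out this reconstruction for $(x,y)\in\Int A\setminus\Delta$ shows the fibre is a single $\Phi$-orbit and, by differentiating the reconstruction (equivalently, by checking every such $(x,y)$ is a regular value of $F$), that $F$ is a local diffeomorphism; with properness this yields the two-fold covering. I expect this explicit fibre count and regularity check to be the principal obstacle, as it is where all the geometry of the resolution enters.

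Finally, for orientations, the triangle defining $G$ together with $F\circ\Phi=F$ forces $G=d\Phi_p$: writing $q=\Phi(p)$ we have $F_{*,q}\circ d\Phi_p=F_{*,p}=F_{*,q}\circ G$, and $F_{*,q}$ is an isomorphism. Hence $G$ preserves orientation if and only if $\Phi$ does, that is, if and only if the antipodal map on $\Sigma S^{n-3}\approx S^{n-2}$ does, i.e.\ if and only if $n-2$ is odd. This is exactly the standing hypothesis that $n$ is odd, so $G$ is orientation preserving, which is what produces the factor $\pm\tfrac12\times2=\pm1$ invoked in the proof of Lemma \ref{5}.
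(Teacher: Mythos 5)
Your proposal is correct and shares the paper's overall strategy---reconstruct the fibre of $F$ explicitly, identify the two preimages as an antipodal pair in the suspension coordinate, and reduce the orientation question to the parity of the antipodal map on $\Sigma S^{n-3}\approx S^{n-2}$---but it executes the central fibre count by a genuinely different route. The paper works synthetically: given $(v_3,v_4)\in\Int A\setminus\Delta$ it forms the $2$-plane $H(v_3,v_4)$ and the line $l(v_3,v_4)=H(v_3,v_4)\cap\{x_n=0\}$, notes that the frame must carry the $x_{n-1}$-axis to $l(v_3,v_4)$, recovers $e[s,u]P_1$ as the intersection point of $e[s,u]M$ with $H(v_3,v_4)$ whose $n$-th coordinate has the sign of $(v_3)_n$, and then observes $P_3,P_4$ are forced. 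You instead invert $F$ through the scalar invariants $x_n$, $y_n$, $\ang{x}{y}$, exploiting that $e[s,u]$ fixes the $x_n$-axis and is orthogonal; your relations are right, and eliminating $a_3,a_4$ gives $p_n^2=\bigl(x_n^2+y_n^2-2x_ny_n\ang{x}{y}\bigr)/\bigl(1-\ang{x}{y}^2\bigr)$, with the needed inequalities $p_n^2\ge\max(x_n^2,y_n^2)$ and $p_n^2\le 1$ following from $(y_n-x_n\ang{x}{y})^2\ge 0$ and Cauchy--Schwarz, so the reconstruction does go through on $\Int A\setminus\Delta$, and your ordering condition $a_4<a_3$ kills the residual sign ambiguity exactly as the paper's choice of orientation of $l(v_3,v_4)$ does. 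Your route buys two things: the constraint that the image lies in $A$ drops out of the same equations instead of being asserted separately, and---more substantially---your observation that $e[s,u]=e[-s,-u]$ holds \emph{exactly} (a reflection depends only on the line spanned by its normal), so that $\Phi=\iota\times\id\times\id$ is a genuine deck transformation with $F\circ\Phi=F$ and hence $G=d\Phi_p$, turns the paper's loose remark that $G$ ``is essentially the antipodal map'' into a precise statement; it also clarifies the paper's slightly misleading phrase about ``two frames $[s,u]$ and $[-s,-u]$,'' which are equal as elements of $SO(n-1)$ and differ only as points of the parameter space. What the paper's route buys is geometric transparency (the line $l(v_3,v_4)$ makes visible at a glance which rotations can work) and a shorter smoothness argument, since the synthetic construction exhibits $F^{-1}$ as manifestly smooth. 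One small economy for you: the appeal to properness is unnecessary, as a local diffeomorphism with constant fibre cardinality two onto a Hausdorff base is automatically a two-fold covering.
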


\begin{proof}
We denote by $v_n$ the $n$-th coordinate of $v\in \R^n$.
We will show that for any $(v_3 ,v_4 ) \in \Int A \setminus \Delta$ (then $(v_3 )_n (v_4 )_n >0$),
we can find $([s,u],P_i )$ so that $F ([s,u],P_i )=(v_3 ,v_4 )$, that is,
\begin{equation}\label{v}
 e[s,u]\frac{P_1 -P_i}{\abs{P_1 -P_i}}=v_i ,\quad i=3,4.
\end{equation}
Consider the 2-plane $H(v_3 ,v_4 )\subset \R^n$ spanned by two vectors $v_3 ,v_4$.
Then the intersection
\[
 l(v_3 ,v_4 ) := H(v_3 ,v_4 ) \cap \{ x_n =0\}
\]
is a one-dimensional linear subspace of $\{ x_n =0\}$.

Since $([s,u],P_i )$ should satisfy (\ref{v}), the points $e[s,u]P_i$ ($i=3,4$) should be on $l(v_3 ,v_4 )$
and $e[s,u]P_1$ should be on $H(v_3 ,v_4 )\cap e[s,u]M$.
So the frame $(s,u)$ should transpose $x_{n-1}$-axis to $l(v_3 ,v_4 )$.
There are two such frames, namely, $[s,u]$ and $[-s,-u]$ for some $[s,u]\in \Sigma S^{n-3}$.
We have $e[s,u]M=e[-s,-u]M$, and this sphere intersects with $H(v_3 ,v_4 )$ at two points.
One has positive $n$-th coordinate and the other has negative one.
When $(v_3)_n >0$ (resp.\ $(v_3)_n <0$), we choose positive (resp.\ negative) one and name it $e[s,u]P_1$.
Then $P_3$ and $P_4$ are determined uniquely so that $\varphi_{1i} (P_1 ,P_3 ,P_4 )=e[s,u]^{-1}v_i$.

Thus we have two points $(\pm [s,u],P_i )$ which are mapped to $(v_3 ,v_4 )\in (S^{n-1})^2$ via $F$.
The map $F$ is clearly smooth.
The above arguments show that $F^{-1}$ is also smooth, hence $F|_{\Int N}$ is locally diffeomorphic
two-fold covering.

The map $G$ is orientation preserving, since it is essentially the antipodal map
$\Sigma S^{n-3} \to \Sigma S^{n-3}$ and it preserves orientation (we assume $n$ is odd).
\end{proof}

\begin{rem}
In general, nothing is known about $H^{k,1}(\D^* )$, $k \ge 4$.
But anyway suppose we have $\Gamma =\sum a_i \Gamma_i \in H^{k,1}(\D^* )$.
Let $\Gamma'$ be a chord diagram with $(k-1)$ chords.
Then, in a similar way as above, we can compute the pairing
$\ang{I(\Gamma )}{r_* \lambda (\alpha (\Gamma' ),e)}$;
choose an immersion $f$ representing $\alpha (\Gamma' )$ so that almost all of the image of $f$ lies in
$x_{n-1}x_n$-axis.
We proceed the configuration space integral construction by using the symmetric volume form of $S^{n-1}$
whose support is localized in the neighborhood of $(\pm 1,0,\dots ,0)$.
Let $\delta_i >0$ be the `size' of the $i$-th resolution of the immersion $f$.
Then, in the limit $\delta_i \to 0$ ($1\le i\le k-1$), only the graphs $\Gamma_i$ obtained from $\Gamma'$
with one of its chord `doubled' contribute to the pairing (see Figure \ref{doubled}).

\begin{figure}[htb]
\begin{center}
\includegraphics[width=9cm, height=1cm]{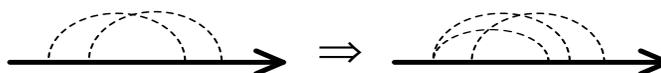}
\end{center}
\caption{doubling operation}\label{doubled}
\qed
\end{figure}
\end{rem}

\providecommand{\bysame}{\leavevmode\hbox to3em{\hrulefill}\thinspace}
\providecommand{\MR}{\relax\ifhmode\unskip\space\fi MR }
\providecommand{\MRhref}[2]{%
  \href{http://www.ams.org/mathscinet-getitem?mr=#1}{#2}
}
\providecommand{\href}[2]{#2}

\end{document}